\theoremstyle{plain}
\newtheorem{thm}[equation]{Theorem}
\newtheorem{lem}[equation]{Lemma}
\newtheorem{cor}[equation]{Corollary}
\theoremstyle{definition}
\newtheorem{defn}[equation]{Definition}
\theoremstyle{remark}
\newtheorem{rem}[equation]{Remark}
\numberwithin{equation}{section}
\definecolor{boxcolor}{RGB}{227,255,227}
\definecolor{boxframecolor}{RGB}{227,255,227}
\newtcolorbox[auto counter]{pbox}[2][]{%
top=-2pt,bottom=2pt,left=2pt,right=2pt,breakable,enhanced,
colback=white,colframe=ForestGreen,
floatplacement=t,float,
title=\textsc{Box}~\thetcbcounter. #2, #1}
\DeclareMathOperator{\esssup}{ess\,sup}
\DeclareMathOperator{\essinf}{ess\,inf}
\newcommand{\R}{\mathbb{R}}
\newcommand{\Rn}{\mathbb{R}^n}
\newcommand{\B}{\mathcal{B}}
\newcommand{\ainc}[1]{\hyperref[defn:aInc]{{\normalfont(aInc){\ensuremath{_{#1}}}}}}
\newcommand{\inc}[1]{\hyperref[defn:aInc]{{\normalfont(Inc){\ensuremath{_{#1}}}}}}
\newcommand{\adec}[1]{\hyperref[defn:aDec]{{\normalfont(aDec){\ensuremath{_{#1}}}}}}
\newcommand{\azero}{\hyperref[defn:A0]{\textcolor{RedViolet}{\normalfont(A0)}}}
\newcommand{\aone}{\hyperref[defn:A1]{\textcolor{RedViolet}{\normalfont(A1)}}}
\newcommand{\aonew}{\hyperref[defn:A1w]{\textcolor{RedViolet}{\normalfont(A1)$_\omega$}}}
\newcommand{\aones}[1]{\hyperref[defn:A1psi]{\textcolor{RedViolet}{\normalfont(A1-\ensuremath{#1})}}}
\newcommand{\atwo}{\hyperref[defn:A2]{\textcolor{RedViolet}{\normalfont(A2)}}}
\newcommand{\atwow}{\hyperref[defn:A2w]{\textcolor{RedViolet}{\normalfont(A2)$_\omega$}}}
\def\essinf{\operatornamewithlimits{ess\,inf}}
\def\esssup{\operatornamewithlimits{ess\,sup}}
\def\loc{{\rm loc}}
\def\R{{\mathbb{R}}}
\def\Rn{{\mathbb{R}^n}}
\newcommand\norm[1]{\left\lVert#1\right\rVert}
\def\phi{\varphi}
\def\phiw{\Phi_\textnormal{w}}
\def\phic{\Phi_\textnormal{c}}
\newcommand\blfootnote[1]{
    \begingroup
    \renewcommand\thefootnote{}\footnote{#1}
    \addtocounter{footnote}{-1}
    \endgroup
}
\author{Vertti Hietanen}
\title[Sufficient condition for boundedness]{Sufficient condition for boundedness of maximal operator on weighted generalized Orlicz spaces}
\address{Department of Mathematics and Statistics, FI-00014 University of Helsinki, Finland}
\email{vertti.hietanen@helsinki.fi}
\begin{document}

\begin{abstract}
    We prove that the Hardy-Littlewood maximal operator is bounded in the weighted generalized Orlicz space if the weight satisfies the classical Muckenhoupt condition $A_p$ and $t \to \frac{\phi(x,t)}{t^p}$ is almost increasing in addition to the standard conditions. 
    \end{abstract}

\date{\today}

\subjclass{46E30, 42B25}

\keywords{maximal operator, generalized Orlicz space, Musielak-Orlicz space, weighted inequalities, Muckenhoupt weights.}

\maketitle 

\blfootnote{The author is supported by the Finnish Cultural Foundation.}

\section{Introduction}

Generalized Orlicz spaces have been studied since the 1940s. They are also known as Musielak--Orlicz spaces due to more comprehensive presentation of J. Musielak in his monograph published in 1983 \cite{musielak}. Intuitively generalized Orlicz space $L^\phi$ consists of all measurable functions $f$ such that
\[
 \varrho_\phi(f) :=\int_\Rn \phi(x, |f(x)|)\, dx<\infty.
\]
We call the function $\varrho_\phi(f)$ a \textit{modular}. If the function $\phi$ does not depend directly on $x$, $\phi(x,t) = \phi(t)$, 
then we obtain \textit{Orlicz spaces}. Function $\phi(x,t)= t^p$ gives us the \textit{Lebesgue space} $L^p$ and $\phi(x,t) = t^{p(x)}$, the \textit{variable exponent Lebesgue space} $L^{p(\cdot)}$. Other important space is given by the double phase functional  $\phi(x,t) = t^{p} + a(x) t^q$, where $q>p$. Apart from being a natural generalization for many well-researched function spaces, the study of generalized Orlicz spaces has applications to image processing, fluid dynamics, and differential equations; see, e.g., \cite{fluid, BD, PDE, bookPolish, HastoOk2022}.

The Hardy--Littlewood maximal operator $M$ is a central tool in harmonic analysis. 
It was proved by D. Gallardo in 1988 \cite{Gal88} that $M:L^\phi\to L^\phi$ for an Orlicz function 
$\phi(x,t)=\phi(t)$ if and only if $\phi$ satisfies \ainc{p} with $p>1$. For an Orlicz function, condition \ainc{p} implies the existence of an equivalent convex function $\psi^{1/p}$ (see Lemma 2.2.1, \cite{Og}), for which a Jensen-type inequality (Lemma \ref{4.3.1}) can be obtained. Jensen's inequality is a helpful tool in obtaining boundedness results, but the inequality does not hold as such in the generalized Orlicz space. 

In 2004, L. Diening proved the boundedness of $M$ locally in the variable exponent, with the exponent $p$ being log-Hölder continuous \cite{Dtrick}. The boundedness was a result of a trick that $\phi(x,t)=t^{p(x)}$ satisfies Jensen's inequality up to an error term independent of $t$. The technique was soon generalized to the global case by D. Cruz-Uribe, A. Fiorenza, and C.J. Neugebauer \cite{Cruz2003}, and independently by A. Nekvinda \cite{Nekvinda}. In the book \cite{ves}, Diening's trick was named as \textit{Key estimate} and formulated into Theorem 4.2.4. 

In 2013, F-Y. Maeda, Y. Mizuta, T. Ohno and T. Shimomura
\cite{MaeMOS13a} provided the first sufficient conditions for boundedness of $M$ in
generalized Orlicz space using rather heavy machinery.
In 2015, P. Hästö \cite{H15} proved a sharper version with $\phi$ satisfying conditions \azero{}-\atwo{} and \ainc{} and simplified the proof. The proof was based on generalized version of the key estimate that was obtained by using \aone{} to estimate the generalized $\phi$ function with a regular Orlicz function locally and then using Jensen-type inequality \ref{4.3.1}. Finally, we note that the condition \azero{} excludes weighted norm
inequalities, both in the classical case $\phi(x,t)=t^p \omega(x)$ and in
the variable exponent case $\phi(x,t)=t^{p(x)}\omega(x)$.  In either case,
the condition \azero{} requires the weight to be essentially constant.

While generalized Orlicz spaces have been actively studied during the last twenty years, weighted generalized Orlicz spaces have  got only a little attention. The weighted space is defined with a weighted modular
$$\int_\Rn \varphi (x, | f (x) |) \omega (x) \, dx,$$ where $\varphi$ satisfies \azero{} and $\omega$ is a weight function. The crucial question is to give a property for the weight $\omega$ so that the Hardy-Littlewood maximal function is bounded. In 1972, B. Muckenhoupt \cite{muck} defined a class of weight functions, denoted by $A_p$, for which the Hardy-Littlewood maximal function is bounded in the weighted Lebesgue space $L^p(\Rn, \omega)$. The functions in this class bear the name \textit{Muckenhoupt weights}. In variable exponent spaces, power-type weights have been studied by V. Kokilashvili N. Samko and S. Samko; see, e.g. \cite{s1, kokilashvili2007general, kokilashvili2007maximal, kokilashvili2007singular}. Muckenhoupt-type weights have been studied by D. Cruz-Uribe, L. Diening, and P. Hästö \cite{CDH11}. The latter two authors have presented an alternative approach in preprint \cite{DP08}, where the weight $\omega$ is treated as a measure instead of a multiplier. 

It was proved by A. Gogatishvili and V. Kokilashvili \cite{crit} in 1994  that the maximal operator $M$ is bounded in weighted Orlicz space if and only if $\phi$ satisfies \ainc{p} and $\omega\in A_p$. This condition for weight $\omega$ is also known to be sufficient in the variable exponent space \cite{DP08}.
In this paper we prove the sufficiency of this condition in generalized Orlicz space. We use a similar approach as in \cite{DP08} by treating the weight $\omega$ as a measure. We obtain a weighted version of the \textit{key estimate} (Theorem 4.3.2 from \cite{Og}), which plays a major role in the proof of  unweighted case. This requires generalizing conditions \aone{} and \atwo{} to \aonew{} and \atwow{}, which are in line with the weighted space. At the end of this paper, we consider these new conditions with functions $\phi(x,t) = t^{p(x)}$ and $\phi(x,t) := t^{p} + a(x) t^q$.

\section{Preliminaries}

Let $\Omega \subset \Rn$ be an open set equipped with the $n$-dimensional Lebesgue measure. By $B$ we always denote an open ball in $\Rn$. If there exists a constant $C$ such that $f(x)\leq C g(x)$ for almost every $x$, then we write $f\lesssim g$. If $g \lesssim f \lesssim g$, then we write $f \approx g$. We say that functions $\phi,\psi:\Omega \times [0, \infty)\to [0,\infty]$ are \textit{equivalent} if there exists $L\geq 1$ such that $\psi(x,\frac{1}{L}t)\leq \phi(x,t) \leq \psi(x,Lt)$. A function $f:(0, \infty) \to \R$ is called \textit{almost increasing} if there exists a constant $a\geq 1$ such that $f(s)\leq a f(t)$ for all $0<s<t$.

For a locally integrable function $f\in L^1_{\loc}(\Omega)$, we define the (Hardy-Littlewood) \textit{maximal operator} $M$ by 
\begin{equation}
\nonumber
Mf(x)=\sup_{B \ni x }\frac{1}{|B|}\int_{B\cap\Omega}|f(y)|\, dy,
\end{equation}
  where the supremum is taken over all open balls $B$ containing $x$.

\begin{defn} \label{defn:aInc}
     Let $p>0$. We say that a function $\phi:\Omega \times [0, \infty)\to [0,\infty]$ satisfies \textup{(aInc)}$_p$ if $t \to \frac{\phi(x,t)}{t^p}$ is almost increasing, i.e there exists $a\geq 1$ such that
     \begin{equation*}
         \frac{\phi(x,s)}{s^p}\leq a \frac{\phi(x,t)}{t^p}
     \end{equation*}
     for all $0<s<t$.
\end{defn}

\begin{defn}
A function $\phi:\Omega \times [0,\infty) \to [0,\infty]$ is said to be a (generalized) $\Phi$-\textit{prefunction} if $x \to \varphi(x,|f(x)|)$ is measurable for every $f \in L^0(\Omega)$ and $t \to \phi(x,t)$ is increasing with  $\varphi(x,0)=\lim_{t\to 0^+}\varphi(x,t)=0$ and $\lim_{t\to \infty}\varphi(x,t)=\infty$ for almost every $x \in \Omega$. 

We say that the $\Phi$-\textit{prefunction} is a \textit{weak} $\Phi$-\textit{function}, and write $\phi \in \phiw (\Omega)$, if it satisfies \ainc{1} on $(0,\infty)$ for a.e $x \in \Omega$. If, in addition, $\varphi \in \phiw(\Omega)$ is convex and left-continuous with respect to $t$ for almost every $x$, then we write $\phi \in \phic(\Omega)$.

\end{defn}

For $\varphi \in \phiw(\Omega)$ and $B\subset \Rn$, we define
\begin{equation*}
    \varphi_B^+(t):=\esssup_{x \in B\cap\Omega} \varphi(x,t) \quad  \text{and} \quad \varphi_B^-(t):=\essinf_{x \in B\cap\Omega} \varphi(x,t).
\end{equation*} 
We say that $\varphi \in \Phi(\Omega)$ is \textit{degenerate} if $\varphi|_{(0,\infty)}\equiv 0$ or $\varphi|_{(0,\infty)}\equiv \infty$. If $\varphi^+_B$ or $\phi^-_B$ is non-degenerate, then it is a weak $\Phi$-function (\cite{Og}, Lemma 2.5.16).

By $\omega$ we always denote a \textit{weight}, that is a non-negative and locally integrable function. We often denote $\omega(\Omega):=\int_\Omega \omega(x)dx$, and in this sense we treat $\omega$ as a measure.

\begin{defn}

Let $\varphi \in \phiw(\Omega)$ and $p \geq 0$. We say that $\varphi$ satisfies 
\begin{itemize}
    \item[(A0):] if there exists $\beta_0\in(0,1]$ such that $\phi(x, \beta_0) \leq 1\leq\phi(x,\frac{1}{\beta_0})$ for almost every $x\in \Omega$. \label{defn:A0}
    \item[(A1)$_\omega$:] \label{defn:A1w} if there exists $\beta_1\in(0,1]$ such that $\varphi(x,\beta_1 t)\leq \varphi(y,t)$ for every $\phi(y,t)\in[1,\frac{1}{\omega(B)}]$, almost every $x,y\in B\cap\Omega$ and every ball $B$ with $\omega(B)\leq1$. 
    \item[(A2)$_\omega$:] if for every $s>0$ there exist $\beta_2\in(0,1]$ and $h\in L^1(\Omega,\omega)\cap L^{\infty}(\Omega)$, $h\geq0$, such that $\varphi(x,\beta_2 t)\leq \varphi(y,t)+h(x)+h(y)$ for almost every $x,y \in \Omega$ when $\varphi(y,t)\in[0,s]$. \label{defn:A2w}
\end{itemize}
\end{defn}

If $\omega(x)=1$, then condition \aonew{}  is equivalent to the standard condition (A1$'$)  presented in the book \cite{Og}. According to book's Lemma 4.2.5, \atwow{} is equivalent to condition (A2), but the book formulation of (A2) contains a flaw that was recently corrected in \cite{rev}.

The following formulation of \atwow{} will be useful when considering the condition in variable exponent spaces.

\begin{lem}\label{a2eq}
    Let $\varphi \in \phiw(\Omega)$. Then $\varphi$ satisfies \atwow{} if and only if there exist $\varphi_{\infty}\in \phiw$, $h\in L^1(\Omega,\omega)\cap L^{\infty}(\Omega)$ and $\beta \in (0,1]$ such that
    \begin{align*}
        &\varphi(x,\beta t) \leq \varphi_\infty(t) + h(x) \quad \text{when} \quad \varphi_\infty(t)\in[0,1],  \\ 
        \text{and} 
        \quad &\varphi_\infty(\beta t) \leq \varphi(x,t) + h(x) \quad \text{when} \quad \varphi(x,t)\in [0,1]
    \end{align*}
    for almost every $x\in \Omega$.
\end{lem}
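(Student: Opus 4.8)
The plan is to prove the two implications separately; in each direction the only place the assumption $\varphi\in\phiw$ enters is through the constant $a\ge1$ in \ainc{1} for $\varphi$ (and, in the converse, the analogous constant for $\varphi_\infty$), which is used to rescale the argument of $\varphi$ so that the two displayed inequalities — which only constrain the range $[0,1]$ — become applicable. For the implication from the two displayed conditions to \atwow{}: fix $s>0$, let $a,a_\infty\ge1$ be the \ainc{1} constants of $\varphi$ and $\varphi_\infty$, and take $x,y,t$ with $\varphi(y,t)\le s$. First, \ainc{1} for $\varphi$ lets us pick $\mu=\mu(s)\in(0,1]$ (essentially $\mu=\min\{1,\tfrac1{as}\}$) with $\varphi(y,\mu t)\le1$, so the second displayed inequality gives $\varphi_\infty(\beta\mu t)\le\varphi(y,\mu t)+h(y)\le 1+\|h\|_\infty$. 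Next, \ainc{1} for $\varphi_\infty$ lets us pick $\lambda=\lambda(\|h\|_\infty)\in(0,1]$ (essentially $\lambda=\tfrac1{a_\infty(1+\|h\|_\infty)}$) with $a_\infty\lambda\le1$ and $\varphi_\infty(\lambda\beta\mu t)\le a_\infty\lambda(\varphi(y,\mu t)+h(y))\le1$; then the first displayed inequality applies and
\[
  \varphi(x,\beta^2\lambda\mu\,t)\le\varphi_\infty(\lambda\beta\mu t)+h(x)\le a_\infty\lambda\big(\varphi(y,\mu t)+h(y)\big)+h(x)\le\varphi(y,t)+h(x)+h(y),
\]
the last step using $a_\infty\lambda\le1$ and monotonicity of $\varphi(y,\cdot)$. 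Hence \atwow{} holds with $\beta_2:=\beta^2\lambda\mu$ and the same $h$.

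For the converse I would assume \atwow{}, apply it with $s=1$ to obtain $\beta_1\in(0,1]$ and $0\le h_0\in L^1(\Omega,\omega)\cap L^\infty(\Omega)$ with $\varphi(x,\beta_1t)\le\varphi(y,t)+h_0(x)+h_0(y)$ whenever $\varphi(y,t)\le1$, and put
\[
  \varphi_\infty(t):=\essinf_{y\in\Omega}\varphi(y,t),\qquad \beta:=\beta_1,\qquad m:=\essinf_\Omega h_0,\qquad h:=h_0+m\,\chi_\Omega.
\]
One checks $\varphi_\infty\in\phiw$: it is increasing with $\varphi_\infty(0^+)=0$; it inherits \ainc{1} with the common constant $a$ of the $\varphi(y,\cdot)$ (dividing by $t>0$ commutes with $\essinf_y$); and $\varphi_\infty(t)\to\infty$ because \atwow{} rules out $\varphi_\infty\equiv0$ — otherwise, choosing for each $t$ a point $y$ with $\varphi(y,t)<1$ gives $\varphi(x,\beta_1t)\le 1+2\|h_0\|_\infty$ for all $x$ and all $t$, which is impossible — and \ainc{1} then pushes a positive value of $\varphi_\infty$ to $+\infty$. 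Also $m\,\chi_\Omega\in L^1(\Omega,\omega)\cap L^\infty(\Omega)$: it is bounded by $\|h_0\|_\infty$, and $m=0$ whenever $\omega(\Omega)=\infty$ (since $h_0\in L^1(\Omega,\omega)$), so $h$ is admissible. The second displayed inequality is then immediate: taking $\essinf_y$ in the $s=1$ inequality with $x$ and $y$ interchanged (valid when $\varphi(x,t)\le1$) and using $\essinf_yh_0(y)=m$ gives $\varphi_\infty(\beta t)\le\varphi(x,t)+h_0(x)+m=\varphi(x,t)+h(x)$.

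The first displayed inequality, $\varphi(x,\beta t)\le\varphi_\infty(t)+h(x)$ for $\varphi_\infty(t)\le1$, is the step I expect to be the crux. Choosing $y$ to almost realize the essential infimum defining $\varphi_\infty(t)$ yields $\varphi(x,\beta_1t)\le\varphi_\infty(t)+h_0(x)+h_0(y)$, but the near-minimizer $y$ may lie where $h_0$ is large, so the error $h_0(y)$ is a priori controlled only by the constant $\|h_0\|_\infty$, which is not absorbable into an $L^1(\Omega,\omega)$-function once $\omega(\Omega)=\infty$. This is precisely where the weighted case departs from the classical one. When $\omega(\Omega)<\infty$ any bounded constant already belongs to $L^1(\Omega,\omega)$ and one merely enlarges $h$. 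When $\omega(\Omega)=\infty$ one has to invoke \atwow{} for arbitrarily large $s$ — so that $\varphi(x,\cdot)$ and $\varphi_\infty$ are compared on longer and longer intervals — together with the fact that $\{h_0>\delta\}$ has finite $\omega$-measure for every $\delta>0$, which permits choosing near-minimizers inside $\{h_0\le\delta\}$, or averaging the error over a set of controlled $\omega$-measure, so as to upgrade the estimate to $\varphi(x,\beta't)\le\varphi_\infty(t)+h'(x)$ with an admissible $h'$. Replacing $\beta$ by $\beta'$ and $h$ by $h+h'$ (shrinking $\beta$ to stay in $(0,1]$) then gives both inequalities with a common $\beta$ and a common $h$; the overall architecture parallels the proof of the unweighted analogue in \cite{Og}, with the weight intervening only through the finite/infinite-measure dichotomy above.
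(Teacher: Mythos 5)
Your first implication (from the $\varphi_\infty$-formulation to \atwow{}) is correct and complete: the two rescalings via \ainc{1} for $\varphi$ and for $\varphi_\infty$ do exactly what is needed, and it is fine that the resulting $\beta_2$ depends on $s$ while $h$ does not. The problem is in the converse, and it is not merely the unresolved ``crux'' you flag at the end: the definition $\varphi_\infty(t):=\essinf_{y\in\Omega}\varphi(y,t)$ is the \emph{wrong function}, so no amount of extra work will make your first displayed inequality hold for it. Concretely, take $\Omega=\R$, $\omega\equiv1$, $\varphi(x,t)=t^{p(x)}$ with $p=3$ on $[0,1]$ and $p=2$ elsewhere. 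This $\varphi$ satisfies \atwow{} (Nekvinda's condition with $p_\infty=2$, since $\{p\neq 2\}$ has finite measure), and the lemma's conclusion holds with $\varphi_\infty(t)=t^2$ and $h$ a multiple of $\chi_{[0,1]}$. But your $\varphi_\infty(t)=\essinf_y t^{p(y)}$ equals $t^3$ for $t\le 1$, and then $\varphi(x,\beta t)\le \varphi_\infty(t)+h(x)$ for all $t$ with $\varphi_\infty(t)\le1$ forces $h(x)\ge\sup_{t\in(0,1]}\big(\beta^2t^2-t^3\big)=\tfrac{4}{27}\beta^6>0$ for a.e.\ $x\notin[0,1]$, hence $h\notin L^1(\R)$. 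The essential infimum is dragged down by a finite-$\omega$-measure region where $\varphi(\cdot,t)$ is small but $h_0$ is large; this is exactly the failure mode you anticipated, but it invalidates the choice of $\varphi_\infty$ itself, not just one step of its verification.

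The repair is the one in Lemmas 4.2.7 and 4.2.10 of \cite{Og}, to which the paper's one-line proof appeals: $\varphi_\infty$ has to be constructed as a limit of $\varphi(y,\cdot)$ along points $y$ where $h_0$ is small (e.g.\ via $\varphi^{\pm}_{\{h_0<1/k\}}$ and a diagonal argument over a countable dense set of $t$'s), so that the error $h_0(y)$ in $\varphi(x,\beta_1 t)\le\varphi(y,t)+h_0(x)+h_0(y)$ can be sent to $0$ rather than bounded by $\|h_0\|_{\infty}$ or by a fixed $\delta$. Your last paragraph gestures at this (``near-minimizers inside $\{h_0\le\delta\}$'') but does not carry it out, and a fixed $\delta>0$ still leaves a constant error that is not $\omega$-integrable when $\omega(\Omega)=\infty$; one must also re-verify the second inequality and the $\Phi_{\mathrm{w}}$-properties for the new $\varphi_\infty$. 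Until that construction is written out, the converse direction is a genuine gap.
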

\begin{proof}
    The proof does not differ from the proofs of Lemmas 4.2.7 and 4.2.10 from \cite{Og}, which cover the case $\omega(x)=1$.
    \end{proof}

The next Jensen-type inequality is an important tool in obtaining the \textit{key estimate}, Theorem \ref{key}. 

\begin{lem}[\cite{Og}, Lemma 4.3.1]\label{4.3.1}
    Let $\varphi:[0,\infty) \to [0,\infty]$ be a prefunction that satisfies \textup{(aInc)$_p$}, $p>0$. Then there exists $\beta \in (0,1]$ such that the following inequality holds for every measurable set $U$, $|U|\in (0, \infty)$, and every $f\in L^1(U)$:
\begin{equation*}
    \varphi\left(\beta \fint_U |f|\, dx\right)^{\frac{1}{p}}\leq \fint_U \varphi(f)^{\frac{1}{p}}\, dx.
\end{equation*}
\end{lem}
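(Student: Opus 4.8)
The plan is to reduce the assertion to the ordinary Jensen inequality for a genuinely convex function comparable to $\varphi^{1/p}$, and then to absorb the resulting comparison constant by a simple rescaling of $f$. At the outset I would dispose of the trivial case $\fint_U|f|\,dx=0$, in which both sides equal $\varphi(0)^{1/p}=0$; otherwise I set $X:=\fint_U|f|\,dx$, which is finite since $f\in L^1(U)$ and $|U|<\infty$.

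The first step is the convexification. Put $g:=\varphi^{1/p}$, which is again an increasing prefunction. By the standard fact that \ainc{p} forces $\varphi$ to be equivalent to some $\psi$ with $\psi^{1/p}$ convex (\cite{Og}, Lemma~2.2.1), and since taking $p$-th roots preserves equivalence, there are $L\ge1$ and a convex function $\widetilde\psi$ with $\widetilde\psi(t/L)\le g(t)\le\widetilde\psi(Lt)$ for all $t\ge0$; explicitly, one may take $\widetilde\psi(t)=\int_0^t\big(\sup_{0<\tau\le s}\tfrac{g(\tau)}{\tau}\big)\,ds$.

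The second step combines Jensen with scaling. Applying the classical Jensen inequality to the convex function $\widetilde\psi$ on the probability space $\big(U,|U|^{-1}\,dx\big)$ and then inserting the comparison above yields, for every $f\in L^1(U)$,
\[
g\!\left(\tfrac1L X\right)\le\widetilde\psi(X)\le\fint_U\widetilde\psi(|f|)\,dx\le\fint_U g(L|f|)\,dx .
\]
Invoking this inequality with $f/L$ in place of $f$ changes its left side to $g(L^{-2}X)$ and its right side to $\fint_U g(|f|)\,dx$; since $g=\varphi^{1/p}$, this is precisely the claimed inequality with $\beta:=L^{-2}\in(0,1]$.

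I expect the only genuine obstacle to be the convexification step, i.e.\ showing---or citing with the correct constants---that $\varphi^{1/p}$ is comparable to a convex function; this is the one place where \ainc{p} is really used. The explicit construction above requires a little care about monotonicity, the behaviour near $0$, and the possibility that $\varphi$ (and hence $g$ and $\widetilde\psi$) takes the value $+\infty$, while the rescaling trick that lets $L$ be swallowed by $\beta$ is elementary but perhaps the one step that is not immediately obvious.
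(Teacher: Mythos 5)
Your argument is correct and is essentially the proof that the paper is citing from \cite{Og} (Lemma 4.3.1, via Lemma 2.2.1): replace $\varphi^{1/p}$, which satisfies \textup{(aInc)}$_1$, by an equivalent convex function, apply the classical Jensen inequality, and absorb the equivalence constant $L$ into $\beta$ by the rescaling $f\mapsto f/L$. The points you flag (left-continuity/extended values of the convexification and the trivial case $\fint_U|f|\,dx=0$) are exactly the routine details to check, and they go through.
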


Next we give an exact definition of \textit{weighted generalized  Orlicz space}; starting with the definition of modular $\varrho_\phi^\omega$.

\begin{defn}
    Let $\phi \in \phiw(\Omega)$. We define the \textit{ weighted modular} $\varrho_\phi^\omega(\cdot)$ for $f \in L^0(\Omega)$ by
    \begin{equation*}
    \varrho_\varphi^\omega(f):=\int_\Omega\varphi(x,|f(x)|)\omega(x)\, dx.
    \end{equation*}
    The \textit{weighted generalized Orlicz space}  is defined as the set 
    \begin{equation*}
        L^\varphi(\Omega, \omega):=\{f \in L^0(\Omega):\norm{f}_{L^\phi(\Omega, \omega)}<\infty\},
    \end{equation*}
    where
    \begin{equation*}
        \norm{f}_{L^\phi(\Omega, \omega)}:=\inf\bigg\{\lambda>0:\varrho^\omega_\varphi\bigg(\frac{f}{\lambda}\bigg)\leq1\bigg\}.
    \end{equation*}
\end{defn}

An useful relation between the modular and the norm is the so-called \textit{unit ball property:}

\begin{lem}[\cite{Og}, Lemma 3.2.3]\label{ubp}
    Let $\phi \in \phiw(\Omega).$ Then
\begin{equation*}
    \norm{f}_{L^\phi(\Omega, \omega)} <1 \ \implies \ \varrho^\omega_\varphi(f)\leq1 \ \implies \ \norm{f}_{L^\phi(\Omega, \omega)}\leq1.
\end{equation*}
If $\varphi$ is left-continuous, then $ \norm{f}^\omega_{L^\phi(\Omega, \omega)} \leq1 \ \iff \ \varrho^\omega_\varphi(f)\leq1 $.
\end{lem}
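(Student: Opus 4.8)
The plan is to deduce everything directly from the definition of the Luxemburg-type norm, using only two structural properties of $\varphi \in \phiw(\Omega)$: monotonicity of $t \mapsto \varphi(x,t)$ (built into the notion of a $\Phi$-prefunction) and, for the final equivalence, left-continuity. Set $A(f) := \{\lambda > 0 : \varrho^\omega_\varphi(f/\lambda) \le 1\}$, so that $\norm{f}_{L^\phi(\Omega,\omega)} = \inf A(f)$. First I would record the elementary monotonicity of the modular in $\lambda$: if $0 < \lambda \le \mu$, then $|f(x)|/\mu \le |f(x)|/\lambda$ for every $x$, hence $\varphi(x,|f(x)|/\mu) \le \varphi(x,|f(x)|/\lambda)$ for a.e.\ $x$ by monotonicity in $t$, and integrating against $\omega\,dx$ gives $\varrho^\omega_\varphi(f/\mu) \le \varrho^\omega_\varphi(f/\lambda)$; in particular $\mu \in A(f)$ whenever $\lambda \in A(f)$ and $\mu \ge \lambda$. (All integrands here are measurable, since $x \mapsto \varphi(x,|g(x)|)$ is measurable for every $g \in L^0(\Omega)$, so the modular is well defined.)

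For the chain of implications: if $\norm{f}_{L^\phi(\Omega,\omega)} < 1$, then by the definition of the infimum there is $\lambda \in A(f)$ with $\lambda < 1$; since then $|f(x)| \le |f(x)|/\lambda$, monotonicity in $t$ and integration yield $\varrho^\omega_\varphi(f) \le \varrho^\omega_\varphi(f/\lambda) \le 1$. Conversely, if $\varrho^\omega_\varphi(f) \le 1$, then $1 \in A(f)$, whence $\norm{f}_{L^\phi(\Omega,\omega)} = \inf A(f) \le 1$.

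For the equivalence under left-continuity it remains only to show that $\norm{f}_{L^\phi(\Omega,\omega)} \le 1$ implies $\varrho^\omega_\varphi(f) \le 1$. The case $\norm{f}_{L^\phi(\Omega,\omega)} < 1$ is already handled above, so I may assume $\inf A(f) = 1$; if $1 \in A(f)$ we are done immediately, and otherwise I choose $\lambda_k \in A(f)$ with $\lambda_k \searrow 1$ (possible since every element of $A(f)$ is $\ge 1$ and $A(f)$ contains points arbitrarily close to $1$). Then for a.e.\ $x$ one has $|f(x)|/\lambda_k \nearrow |f(x)|$, so left-continuity of $t \mapsto \varphi(x,t)$ gives $\varphi(x,|f(x)|/\lambda_k) \nearrow \varphi(x,|f(x)|)$; the monotone convergence theorem applied to the measure $\omega\,dx$ then yields $\varrho^\omega_\varphi(f/\lambda_k) \nearrow \varrho^\omega_\varphi(f)$, and since each $\varrho^\omega_\varphi(f/\lambda_k) \le 1$ we conclude $\varrho^\omega_\varphi(f) \le 1$.

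This is exactly Lemma 3.2.3 of \cite{Og} with Lebesgue measure replaced by $\omega\,dx$, and the argument uses nothing about $\omega\,dx$ beyond its being a positive measure for which monotone convergence is available; consequently there is no substantive obstacle. The only non-formal point is the last limiting step, where it is left-continuity of $\varphi$ (not merely monotonicity) that makes the monotone convergence argument deliver the reverse inequality and hence the equivalence — precisely the place where the statement genuinely requires that extra hypothesis. The minor care points are keeping the direction of monotonicity in $\lambda$ straight, and treating the trivial degenerate cases ($A(f)=\emptyset$, i.e.\ $\norm{f}_{L^\phi(\Omega,\omega)}=\infty$, or $1\in A(f)$) separately.
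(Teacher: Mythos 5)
Your proof is correct and is exactly the standard Luxemburg-norm argument; the paper itself gives no proof but simply cites Lemma 3.2.3 of the reference (noting afterwards that the property holds for any weight and more general measures), and your argument — monotonicity of the modular in $\lambda$, the infimum characterization, and monotone convergence along $\lambda_k\searrow 1$ combined with left-continuity — is precisely that cited proof transplanted to the measure $\omega\,dx$. You also correctly identify left-continuity as the one place where more than monotonicity is needed, so there is nothing to add.
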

The unit ball property holds for any weight $\omega$ and more general measures as well. 

For more information about generalized Orlicz spaces see the monograph \cite{Og}.

\section{Properties of $A_p$ -weights}

The classical Muckenhoupt condition can be stated as follows.
\begin{defn}
Let $1< q <\infty$ and $q'$ satisfy $\frac{1}{q}+\frac{1}{q'}=1$. If a non-negative and locally integrable weight function $\omega$ satisfies
\begin{equation*}
    [\omega]_{A_q}:=\sup_{B\in \B}|B|^{-q}\omega(B)\left(\int_B \omega(x)^{-q'/q} \, dx\right)^{q-1}<\infty,
\end{equation*}
where $\B$ denotes the family of all open balls $B \subset \Rn$, then we say that $\omega \in A_q$.
\end{defn}

We define the class $A_\infty$ as the union of all classes $A_q$, $q \in [1,\infty)$, and the class $A_1$ to consist of all weights $\omega$ satisfying $M\omega \lesssim \omega$.

In a paper \cite{muck}, published in 1972, B. Muckenhoupt showed the following famous result for weights satisfying the above condition.

\begin{lem}[\cite{muck}, Theorem 9]\label{clas-max}
    Let $1<q<\infty$. Then $\omega \in A_q$ if and only if 
    \[M:L^q(\Omega,\omega)\to L^q(\Omega,\omega).\]
\end{lem}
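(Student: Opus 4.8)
This is the classical Muckenhoupt theorem; I only outline the strategy, and for simplicity take $\Omega=\Rn$ (the general case is identical, with every ball $B$ replaced by $B\cap\Omega$). Throughout write $q'=\tfrac{q}{q-1}$ and let $\sigma:=\omega^{-q'/q}=\omega^{1-q'}$ be the conjugate weight, so that $[\omega]_{A_q}=\sup_B|B|^{-q}\,\omega(B)\,\sigma(B)^{q-1}$ and a one-line computation gives $\sigma^{q-1}\omega\equiv 1$.

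For the necessity of $\omega\in A_q$, the plan is to test the boundedness of $M$ against the conjugate weight. Fixing a ball $B$, for $N\in\N$ one sets $f_N:=\min(\sigma,N)\chi_B$; since $\min(\sigma,N)^{q-1}\omega\le\sigma^{q-1}\omega=1$ one has $f_N\in L^q(\Rn,\omega)$ with $\int f_N^q\,\omega\le\int_B\min(\sigma,N)\,dx<\infty$, while for $x\in B$ trivially $Mf_N(x)\ge|B|^{-1}\int_B\min(\sigma,N)\,dy$. Inserting this into $\|Mf_N\|_{L^q(\Rn,\omega)}\le C\|f_N\|_{L^q(\Rn,\omega)}$, dividing by $\int_B\min(\sigma,N)\,dx$ and letting $N\to\infty$ by monotone convergence yields $|B|^{-q}\omega(B)\sigma(B)^{q-1}\le C^q$; taking the supremum over $B$ finishes this direction.

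For the sufficiency I would argue in three steps. First, Hölder's inequality combined with the $A_q$ bound gives, for every measurable $E\subset B$, the estimate $|E|/|B|\lesssim(\omega(E)/\omega(B))^{1/q}$; taking $E=B$ and $B\to 5B$ shows in particular that $A_q$ weights are doubling. Second, one establishes the weak-type inequality $\omega(\{Mf>\lambda\})\lesssim\lambda^{-q}\int f^q\,\omega$: cover $\{Mf>\lambda\}$ by balls $B_x\ni x$ with $|B_x|^{-1}\int_{B_x}|f|>\lambda$, extract by the Vitali covering lemma a countable subfamily $\{B_j\}$, disjoint after a five-fold dilation, and observe that Hölder's inequality on each $B_j$ yields $\lambda^q\le[\omega]_{A_q}\,\omega(B_j)^{-1}\int_{B_j}f^q\,\omega$; summing over $j$ and using disjointness together with the doubling bound $\omega(5B_j)\lesssim\omega(B_j)$ produces the claim. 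Third, one uses the self-improvement of the Muckenhoupt class: $\omega\in A_q$ implies $\omega\in A_{q_0}$ for some $q_0\in(1,q)$. Granting this, the second step (applied with $q_0$ in place of $q$) shows $M$ is of weak type $(q_0,q_0)$ with respect to the measure $d\mu=\omega\,dx$, and $M$ is trivially bounded on $L^\infty$; the Marcinkiewicz interpolation theorem on the ($\sigma$-finite) measure space $(\Rn,\mu)$ then upgrades these endpoint bounds to strong type $(q,q)$, i.e. $M\colon L^q(\Rn,\omega)\to L^q(\Rn,\omega)$.

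The step I expect to be the real obstacle is the self-improvement property used above: it rests on the reverse Hölder inequality, namely that every $A_q$ weight satisfies $\big(|B|^{-1}\int_B\omega^{1+\delta}\big)^{1/(1+\delta)}\lesssim|B|^{-1}\int_B\omega$ for some $\delta>0$ depending only on $n$ and $[\omega]_{A_q}$, proved by a Calder\'on--Zygmund stopping-time decomposition followed by an iteration. Applying this to the conjugate weight $\sigma$ (which lies in $A_{q'}$ with $[\sigma]_{A_{q'}}=[\omega]_{A_q}^{q'/q}$) lets one enlarge slightly the negative power of $\omega$ appearing in the $A_q$ condition, which is exactly what lowers the exponent from $q$ to some $q_0<q$. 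Everything else in the argument is soft: Hölder's inequality, the Vitali covering lemma, and Marcinkiewicz interpolation over a general measure space.
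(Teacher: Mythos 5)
The paper does not prove this lemma at all: it is quoted verbatim as Theorem~9 of Muckenhoupt's 1972 paper \cite{muck} and used as a black box. So there is nothing in the text to compare against; what you have written is the standard modern proof (essentially the Muckenhoupt/Coifman--Fefferman argument found in Duoandikoetxea, Grafakos, or Garc\'{\i}a-Cuerva--Rubio de Francia), and its outline is correct: the duality-weight test function $f_N=\min(\sigma,N)\chi_B$ for necessity; H\"older plus $A_q$ for the absolute-continuity estimate $|E|/|B|\lesssim(\omega(E)/\omega(B))^{1/q}$ and hence doubling; Vitali covering for the weak $(q,q)$ bound; reverse H\"older for $\sigma\in A_{q'}$ to get the open-endedness $A_q\subset A_{q_0}$, $q_0<q$; and Marcinkiewicz interpolation against $d\mu=\omega\,dx$. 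Your constant $[\sigma]_{A_{q'}}=[\omega]_{A_q}^{q'/q}$ checks out, since $q'/q=q'-1$ and $\sigma^{-q/q'}=\omega$.

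Two small caveats, neither fatal. First, in the Vitali step the collection of balls covering $\{Mf>\lambda\}$ need not have uniformly bounded radii, so you should either exhaust $\{Mf>\lambda\}$ by compact sets (using the finite version of the covering lemma) or invoke the version of Vitali valid for families with $\sup_j|B_j|<\infty$ after first noting that $|B_x|\le\lambda^{-1}\|f\|_{L^1(B_x)}$ does not by itself give such a bound; this is routine but worth a sentence. Second, your reduction of the domain case to $\Rn$ works cleanly only for the sufficiency direction (extend $f$ by zero and use $M_\Omega f\le M(f\chi_\Omega)$); for necessity on a general open $\Omega$ the test-function argument only produces the $A_q$ inequality for balls meeting $\Omega$, so the "only if" half as stated for arbitrary $\Omega$ is really a statement about $\Omega=\Rn$ (which is how the paper uses it, e.g.\ in Lemma~\ref{bxy}). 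Since the paper itself is content to cite Muckenhoupt, your sketch more than suffices.
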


The next lemma gives us an alternative definition for the class $A_q$, which is more closely related to boundedness of $M$. 

\begin{lem}[\cite{wn}, Theorem 1.12]\label{lem1}
Let $1< q <\infty$ and $[\omega]_{A_q}$ be the constant from $A_q$ condition. Then $\omega \in A_q$ if and only if 
\begin{equation*}\label{alt}
    \left(\frac{1}{|B|}\int_B f(x)\, dx\right)^{q}\leq \frac{[\omega]_{A_q}}{\omega(B)} \int_Bf(x)^q\omega(x) \, dx
\end{equation*}
for all measurable $f\geq 0$ and for all open balls $B\subset \Rn$.

\end{lem}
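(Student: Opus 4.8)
The plan is to derive both implications from Hölder's inequality together with one canonical test function; nothing deeper is needed, and the only real care is an integrability issue in the converse direction.

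For the forward implication, assume $\omega \in A_q$. Fix a ball $B$ and a measurable $f \ge 0$, and split $f = (f\,\omega^{1/q})\,\omega^{-1/q}$ on $B$. Hölder's inequality with exponents $q$ and $q'$ gives
\begin{equation*}
\int_B f\,dx \;\le\; \left(\int_B f^q\,\omega\,dx\right)^{1/q}\left(\int_B \omega^{-q'/q}\,dx\right)^{1/q'}.
\end{equation*}
Raising to the $q$-th power and using $q/q' = q-1$ yields
\begin{equation*}
\left(\int_B f\,dx\right)^q \;\le\; \left(\int_B f^q\,\omega\,dx\right)\left(\int_B \omega^{-q'/q}\,dx\right)^{q-1}.
\end{equation*}
Dividing by $|B|^q$ and multiplying and dividing by $\omega(B)$, the last two factors combine into $\omega(B)\,|B|^{-q}\big(\int_B\omega^{-q'/q}\,dx\big)^{q-1} \le [\omega]_{A_q}$, which is precisely the asserted inequality.

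For the converse, suppose the displayed inequality holds for all $f \ge 0$ and all balls with some finite constant $C$ in place of $[\omega]_{A_q}$; we shall show $\omega \in A_q$ with $[\omega]_{A_q} \le C$, which together with the forward direction also identifies $C = [\omega]_{A_q}$ as the sharp constant. Since $1-q' = -q'/q$, the natural choice is $f := \omega^{-q'/q}$, for which $f^q\,\omega = \omega^{-q'/q}$. Writing $I := \int_B \omega^{-q'/q}\,dx$ and inserting this $f$ would give $I^q/|B|^q \le (C/\omega(B))\,I$, hence $\omega(B)\,I^{q-1} \le C\,|B|^q$, i.e. exactly $|B|^{-q}\omega(B)\big(\int_B\omega^{-q'/q}\,dx\big)^{q-1} \le C$; taking the supremum over balls finishes the proof.

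The one point that must be made rigorous — and the only genuine obstacle — is that $\omega^{-q'/q}$ need not be integrable on $B$, so the step above is a priori of the form $\infty \le \infty$. To fix this, apply the hypothesis to the truncations $g_N := \min\{\omega^{-q'/q},\,N\}$, which are bounded, hence integrable, on $B$. A short case check (if $\omega^{-q'/q}\le N$ then $g_N^q\,\omega = \omega^{1-q'} = \omega^{-q'/q} = g_N$; if $\omega^{-q'/q}>N$ then $\omega < N^{-q/q'}$, so $g_N^q\,\omega = N^q\,\omega < N = g_N$) shows $g_N^q\,\omega \le g_N$ pointwise on $B$. Hence $\big(\tfrac{1}{|B|}\int_B g_N\,dx\big)^q \le \tfrac{C}{\omega(B)}\int_B g_N^q\,\omega\,dx \le \tfrac{C}{\omega(B)}\int_B g_N\,dx$, and since $\int_B g_N\,dx \in (0,\infty)$ — positivity holding because $\omega$ is locally integrable, hence finite a.e., while $|B|>0$ — we may divide to obtain $\int_B g_N\,dx \le \big(C\,|B|^q/\omega(B)\big)^{1/(q-1)}$. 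Letting $N\to\infty$ and using monotone convergence transfers this bound to $I = \int_B\omega^{-q'/q}\,dx$; raising to the power $q-1$ then gives the $A_q$-estimate for $B$, and the supremum over balls completes the argument. Everything else is routine bookkeeping with the conjugate exponents.
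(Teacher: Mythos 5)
Your proof is correct. The paper does not prove this lemma at all --- it is quoted directly from the cited reference (\cite{wn}, Theorem 1.12) --- and your argument is the standard one that reference uses: H\"older's inequality with the splitting $f=(f\omega^{1/q})\,\omega^{-1/q}$ for the forward direction, and the test function $\omega^{-q'/q}$ for the converse, with the truncation $g_N=\min\{\omega^{-q'/q},N\}$ correctly inserted to avoid the $\infty\le\infty$ trap and removed by monotone convergence. The only unstated (and standard) convention is that $\omega>0$ a.e., which is needed both for the H\"older splitting and to make sense of $\omega(B)$ in the denominator; this is implicit in the lemma as the paper states it, so there is nothing to fix.
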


The following property of weights in $A_q$ classes is often useful in controlling $\omega(B)$ with various operations; see, e.g., Lemma \ref{wh}.

\begin{lem}\label{bxy}
    Let $1<q<\infty$. If $\omega \in A_q,$ then
    $$\omega(B(x,r))\gtrsim\omega(B(y,R))\left(\frac{r}{|x-y|+r+R}\right)^{qn},$$
    for every $x,y \in \Rn$ and $R,r>0$.
\end{lem}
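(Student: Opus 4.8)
The plan is to derive this directly from the integral form of the $A_q$ condition in Lemma \ref{lem1}, applied on a single auxiliary ball that contains both $B(x,r)$ and $B(y,R)$.

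First I would fix $x,y\in\Rn$ and $r,R>0$, set $\rho:=|x-y|+r+R$, and take the enveloping ball $B':=B(x,\rho)$. It is centered at $x$ on purpose, so that the volume of $B(x,r)$ relative to $B'$ is exactly $(r/\rho)^n$. One checks the two containments $B(x,r)\subset B'$ (since $r\le\rho$) and $B(y,R)\subset B'$ (since every $z\in B(y,R)$ satisfies $|z-x|\le|z-y|+|y-x|<R+|x-y|\le\rho$); in particular $\omega(B')\ge\omega(B(y,R))$.

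Next I would apply Lemma \ref{lem1} with the ball $B'$ and the nonnegative measurable function $f=\chi_{B(x,r)}$. Because $B(x,r)\subset B'$, the left-hand side of the inequality in Lemma \ref{lem1} equals $(|B(x,r)|/|B'|)^q=(r/\rho)^{qn}$ and the integral on the right equals $\omega(B(x,r))$, so the $A_q$ inequality reads
\[
\left(\frac{r}{\rho}\right)^{qn}\le\frac{[\omega]_{A_q}}{\omega(B')}\,\omega(B(x,r)).
\]
Rearranging and using $\omega(B')\ge\omega(B(y,R))$ then gives
\[
\omega(B(x,r))\ge\frac{1}{[\omega]_{A_q}}\,\omega(B')\left(\frac{r}{\rho}\right)^{qn}\ge\frac{1}{[\omega]_{A_q}}\,\omega(B(y,R))\left(\frac{r}{|x-y|+r+R}\right)^{qn},
\]
which is the asserted estimate with implicit constant $[\omega]_{A_q}^{-1}$ depending only on $\omega$ and $q$.

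I do not expect a genuine obstacle here: the argument is short, and the only point requiring a little care is the choice of the common ball — it must be centered at $x$ (not, say, at the midpoint of $x$ and $y$) so that the ratio $|B(x,r)|/|B'|$ produces precisely $r$ in the numerator and the polynomial factor comes out in the stated form. (If one prefers, one may first note that $\omega\in A_q$ forces $\omega>0$ a.e., so all the quantities above are positive and finite and the rearrangement is legitimate; the inequality is trivial when $\omega(B(y,R))=0$.)
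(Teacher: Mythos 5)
Your proof is correct, and it reaches the stated bound (with the clean constant $[\omega]_{A_q}^{-1}$) by a route that differs from the paper's in which lemma it leans on. The paper proves Lemma \ref{bxy} by invoking Muckenhoupt's theorem (Lemma \ref{clas-max}): it writes $\omega(B(x,r))=\int(\chi_{B(x,r)})^q\omega\gtrsim\int(M\chi_{B(x,r)})^q\omega$ and then bounds $M\chi_{B(x,r)}(z)$ from below on $B(y,R)$ by testing the supremum with the ball $B(y,|x-y|+r+R)$. You instead apply the integral characterization of $A_q$ (Lemma \ref{lem1}) to $f=\chi_{B(x,r)}$ on the enveloping ball $B'=B(x,\rho)$; the geometric content is identical --- in both arguments the key quantity is the volume ratio $|B(x,r)|/|B(x,\rho)|=(r/\rho)^n$ of $B(x,r)$ inside a ball of radius $\rho=|x-y|+r+R$ containing both given balls --- but your version is more elementary, since Lemma \ref{lem1} is essentially a consequence of H\"older's inequality and the definition of $[\omega]_{A_q}$, whereas the $L^q(\omega)$-boundedness of $M$ is a substantially deeper fact, and your version makes the dependence of the implicit constant on $[\omega]_{A_q}$ explicit. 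One small remark: your insistence that the enveloping ball \emph{must} be centered at $x$ is not quite right --- the paper centers it at $y$ with the same radius $\rho$ and obtains the same ratio $(r/\rho)^n$, since all that matters is that the big ball has radius $\rho$ and contains $B(x,r)$ and $B(y,R)$; but this does not affect the validity of your argument.
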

\begin{proof}
    Let $x,y \in \Rn$ and $R,r>0$. Since $\omega \in A_q$, we have by Lemma \ref{clas-max} that $M:L^q(\Rn,\omega) \to L^q(\Rn,\omega)$.  Then
    \begin{align*}        \omega(B(x,r))&=\int_\Rn(\chi_{B(x,r)}(z))^{q}\omega(z)dz\gtrsim \int_\Rn(M\chi_{B(x,r)}(z))^{q}\omega(z)dz \\ 
    &=\int_\Rn \left(\sup_{B\ni z}\frac{|B\cap B(x,r)|}{|B|}\right)^{q}\omega(z)dz. 
    \end{align*}
    Let us choose $B=B(y,|x-y|+r+R)$. Then
    \begin{align*}
        \int_\Rn \left(\sup_{B\ni z}\frac{|B\cap B(x,r)|}{|B|}\right)^{q}\omega(z)dz&\geq \int_{B(y,R)} \left(\frac{|B(x,r)|}{|B(y,|x-y|+r+R)|}\right)^{q}\omega(z)dz\\
        &=\omega(B(y,R))\left(\frac{r}{|x-y|+r+R}\right)^{qn}.
    \end{align*}
\end{proof}

\begin{rem}
    We can observe that the weight $\omega \in A_q$ is doubling as a measure since by Lemma \ref{bxy} it follows that $\omega(B(x,r))\gtrsim \omega(B(x,2r))3^{-qn}.$

\end{rem}

\begin{rem}
    We can also notice that by Lemma \ref{bxy} the weight $\omega \in A_\infty$ has as at most polynomial growth i.e  $\omega(0,r)\lesssim \omega(B(0,1))(1+r)^{qn}\lesssim r^{qn}$ for some $q\in[1,\infty)$ when $r>1$.
\end{rem}

\section{Boundedness}

  Next we prove the key estimate in weighted generalized Orlicz space. We extend the techniques from \cite{Og} to the weighted case.

\begin{thm}\label{key}
    Let $\varphi \in \Phi_w(\Omega)$ satisfy \azero{}, \aonew{}, \atwow{} and \ainc{p}, with $p \in (0, \infty)$. Let $\omega \in A_p$. Then there exists $\beta>0$ and $h\in L^1(\Omega, \omega)\cap L^{\infty}(\Omega)$ such that
\begin{equation*}
    \varphi\left(x,\frac{\beta}{|B|}\int_{B\cap \Omega} |f| \, dy\right)^{\frac{1}{p}}\leq \frac{1}{|B|}\int_{B\cap\Omega} \varphi(y,|f|)^{\frac{1}{p}}\, dy+h(x)^{\frac{1}{p}}+\frac{1}{|B|}\int_{B\cap\Omega} h(y)^{\frac{1}{p}}\, dy
\end{equation*}
for every ball $B$, $x\in B\cap\Omega$ and $f\in L^{\varphi}(\Omega,\omega)$ with $\varrho_\varphi^\omega(f)\leq 1$.
\end{thm}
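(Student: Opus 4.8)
The plan is to retrace, step by step, the proof of the unweighted key estimate (Theorem~4.3.2 in \cite{Og}), systematically replacing the modular bound $\varrho_\varphi(f)\le1$ by $\varrho_\varphi^\omega(f)\le1$, the Lebesgue size $|B|$ of a ball by its weighted size $\omega(B)$, and the conditions \aone{}, \atwo{} by their weighted counterparts \aonew{}, \atwow{}. The places where the unweighted argument uses a fact with no immediate weighted analogue are those comparing a Lebesgue average of $|f|$, or of $\varphi(\cdot,|f|)^{1/p}$, to a weighted integral; there the hypothesis $\omega\in A_p$ enters through Lemma~\ref{lem1}, which is precisely the statement that $\bigl(\tfrac1{|B|}\int_B g\bigr)^{p}\le\tfrac{[\omega]_{A_p}}{\omega(B)}\int_B g^{p}\,\omega$ for $g\ge0$. (Throughout, the discrepancy between $\tfrac1{|B|}\int_{B\cap\Omega}$ and $\fint_{B\cap\Omega}$ is absorbed by extending $f$ and $\varphi(\cdot,|f|)$ by zero outside $\Omega$ and applying Jensen's inequality over the whole ball $B$, which is legitimate because every prefunction vanishes at $0$.)

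Assuming $f\ge0$ and discarding the balls on which $\tfrac1{|B|}\int_{B\cap\Omega}f=0$, I would record three preparatory facts. First, by Lemma~\ref{a2eq} we may fix $\varphi_\infty\in\phiw$, a function $h\in L^1(\Omega,\omega)\cap L^\infty(\Omega)$ and a constant $\beta_*\in(0,1]$ for which the two inequalities of Lemma~\ref{a2eq} hold; one checks that $\varphi_\infty$ may be taken to satisfy \ainc{p}, and, by \azero{}, that $\varphi_B^-:=\essinf_{y\in B\cap\Omega}\varphi(y,\cdot)$ is non-degenerate, hence a weak $\Phi$-function, and inherits \ainc{p} because the essential infimum of functions with a common \ainc{p} constant again satisfies \ainc{p}. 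Thus Lemma~\ref{4.3.1} is available for $\varphi_\infty$ and for $\varphi_B^-$. Second, \azero{} together with \ainc{p} gives $\varphi(y,t)\gtrsim t^{p}$ for $t\ge\tfrac1{\beta_0}$ and, consequently, $t\le C\,\varphi(y,t)^{1/p}$ whenever $\varphi(y,t)>1$, with $C$ depending only on $\beta_0$ and the \ainc{p} constant. Third, combining Lemma~\ref{lem1} with the second fact, $\tfrac1{|B|}\int_{B\cap\Omega}|f|\,\chi_{\{\varphi(\cdot,|f|)\le1\}}$ is bounded by a constant depending only on $[\omega]_{A_p}$, $\beta_0$, $p$ (on that set one has $|f|^{p}\lesssim\varphi(\cdot,|f|)\le1$ once $|f|>\tfrac1{\beta_0}$, and $|f|\le\tfrac1{\beta_0}$ otherwise), while $\bigl(\tfrac1{|B|}\int_{B\cap\Omega}\varphi(y,|f|)^{1/p}\,dy\bigr)^{p}\le\tfrac{[\omega]_{A_p}}{\omega(B)}\,\varrho_\varphi^\omega(f)\le\tfrac{[\omega]_{A_p}}{\omega(B)}$.

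Next split $f=f'+f''$ with $f'=f\chi_{\{\varphi(\cdot,f)\le1\}}$, $f''=f\chi_{\{\varphi(\cdot,f)>1\}}$, and bound the left-hand side by $\varphi\bigl(x,\tfrac{2\beta}{|B|}\int_{B\cap\Omega}f'\bigr)^{1/p}+\varphi\bigl(x,\tfrac{2\beta}{|B|}\int_{B\cap\Omega}f''\bigr)^{1/p}$, using monotonicity of $\varphi$ and $\max(u,v)^{1/p}\le u^{1/p}+v^{1/p}$. For the $f'$-term, its average is bounded by the third preparatory fact, so for $\beta$ small $\varphi_\infty$ of the relevant multiple of it is $\le1$; applying \atwow{} on the left, Lemma~\ref{4.3.1} for $\varphi_\infty$, and \atwow{} on the right — admissible since $\varphi(y,f'(y))\le1$ everywhere — reproduces the whole right-hand side $\tfrac1{|B|}\int_{B\cap\Omega}\varphi(y,|f|)^{1/p}\,dy+h(x)^{1/p}+\tfrac1{|B|}\int_{B\cap\Omega}h^{1/p}\,dy$, and this works for every ball. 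For the $f''$-term, the second preparatory fact gives $\tfrac1{|B|}\int_{B\cap\Omega}f''\le C R$ with $R:=\tfrac1{|B|}\int_{B\cap\Omega}\varphi(y,|f|)^{1/p}\,dy$; if $2\beta C R<\beta_0$, then \ainc{p} with reference point $\beta_0$ (where $\varphi(x,\cdot)\le1$) gives $\varphi\bigl(x,\tfrac{2\beta}{|B|}\int_{B\cap\Omega}f''\bigr)^{1/p}\lesssim\beta R\le R$ for $\beta$ small; the only remaining possibility, $2\beta C R\ge\beta_0$, forces $R$ large and hence $\omega(B)\le1$ by the third fact, and there I split $f''$ once more at the level $\varphi(\cdot,f)=\tfrac1{\omega(B)}$, treating the part below this level by Lemma~\ref{4.3.1} for $\varphi_B^-$ followed by the upgrade $\varphi_B^-\rightsquigarrow\varphi(x,\cdot)$ from \aonew{} (whose admissible range $[1,\tfrac1{\omega(B)}]$ is exactly this one; if the relevant average is too small to reach this range, it is reabsorbed via \ainc{p} with reference $\beta_0$ as above), and the part above this level directly through the second preparatory fact and Lemma~\ref{lem1}. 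Collecting the pieces and merging the finitely many auxiliary constants into one $\beta>0$ gives the estimate up to a multiplicative constant on the main term; a final scaling-down of $\beta$ (using \ainc{p} once more, which only improves the $h$-terms) removes that constant, and the error function is $h$, possibly enlarged by a fixed bounded amount, hence in $L^1(\Omega,\omega)\cap L^\infty(\Omega)$ by construction.

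I expect the main obstacle to be the large-value part in the regime $\omega(B)\le1$, where $f''$ carries $\varphi$-values above $\tfrac1{\omega(B)}$: there \aonew{} is of no use (its range $[1,\tfrac1{\omega(B)}]$ does not reach those values), and, since no \textup{(aDec)}-type decay condition is assumed, one cannot reroute through $\varphi_\infty$ either, because $\varphi_\infty(ct)\lesssim\varphi(y,t)$ fails for large $t$; everything must be squeezed out of the polynomial lower bound $\varphi(y,t)\gtrsim t^{p}$ and the Muckenhoupt inequality of Lemma~\ref{lem1}, and the delicate point is closing up the chain of constants so that no spurious factor survives on the right-hand side. A secondary nuisance is the exponent $p<1$, where $u\mapsto u^{1/p}$ is only subadditive up to $2^{1/p-1}$ and \ainc{p} is in fact vacuous; that constant then has to be absorbed into the final choice of $\beta$.
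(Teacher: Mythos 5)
Your overall architecture is the right one and largely coincides with the paper's: split $f$ into a bounded part handled by \atwow{} plus Jensen, and an unbounded part handled by Jensen for $\varphi_B^-$, the Muckenhoupt inequality of Lemma~\ref{lem1}, and \aonew{}. The bounded part and the ``middle'' regime are fine (modulo two repairable details: your claim that $\varphi_\infty$ from Lemma~\ref{a2eq} ``may be taken to satisfy \ainc{p}'' is asserted, not proved, and is avoidable by running Lemma~\ref{4.3.1} for $\varphi(x,\cdot)^{1/p}$ with $x$ fixed instead of for $\varphi_\infty$; and $R\gtrsim 1$ only gives $\omega(B)\lesssim 1$, so $\beta$ must be shrunk to force $\omega(B)\le 1$ before \aonew{} is literally applicable).

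The genuine gap is your third split, at the level $\varphi(\cdot,f)=\tfrac{1}{\omega(B)}$, and the claim that the top piece $f''_{\mathrm{top}}=f\chi_{\{\varphi(\cdot,f)>1/\omega(B)\}}$ can be handled ``directly through the second preparatory fact and Lemma~\ref{lem1}.'' Those two ingredients only yield $\tfrac{1}{|B|}\int_{B\cap\Omega}f''_{\mathrm{top}}\,dy\lesssim\omega(B)^{-1/p}$, and the modular constraint permits the top set to carry essentially all of the mass, so this bound is sharp. You then still have to evaluate $\varphi(x,\cdot)$ at an argument of size $\omega(B)^{-1/p}$ and dominate it by $\tfrac{1}{|B|}\int\varphi(y,f)^{1/p}dy\le([\omega]_{A_p}/\omega(B))^{1/p}$; with no decay assumption and no transfer condition in force at that scale, nothing prevents $\varphi(x,\cdot)$ from being far larger there than $\varphi(y,\cdot)$ for the other $y\in B$ (e.g.\ $\varphi(x,t)=t^{p(x)}$ with $p(x)$ much larger than $p$ elsewhere on $B$), so the chain cannot close. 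The misconception is that the window $[1,\tfrac{1}{\omega(B)}]$ in \aonew{} constrains the pointwise values $\varphi(y,f(y))$ along $f$; it does not. It constrains $\varphi(y,t)$ at a \emph{witness point} $y$ and at the \emph{averaged} argument $t=\beta_J'D$, and the Muckenhoupt--Jensen computation $\varphi_B^-(\beta_J'D)\le\tfrac{[\omega]_{A_p}}{\omega(B)}\cdot c_\omega^{-1}\varrho_\varphi^\omega(f)<\tfrac{1}{\omega(B)}$ shows that this averaged quantity lands in the admissible window automatically, no matter how large the pointwise values of $\varphi(y,f(y))$ are. This is exactly the paper's argument: it never splits at the level $\tfrac1{\omega(B)}$, applies the $\varphi_B^-$--Jensen--Muckenhoupt estimate to the entire large part at once, and then picks $y$ with $\varphi(y,\beta_J'D)\in[1,\tfrac1{\omega(B)}]$ and $\varphi(y,\beta_J'D)\le 2\varphi_B^-(\beta_J'D)$ to invoke \aonew{}. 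Your own ``middle part'' treatment is precisely this argument; applying it to all of $f''$ and deleting the third split closes the proof. As written, however, the top piece is left with a method that does not work, and your closing paragraph acknowledges the obstacle without resolving it.
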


\begin{proof}
    Let $\beta_0$ be the constant from \azero{}, $\beta_1$ the constant from \aonew{}, and $\beta_2$ from \atwow{}. Let $\beta_J$ be the constant from the  Jensen-type inequality Lemma \ref{4.3.1}.
We may assume without loss of generality that $f\geq0$. Fix a ball $B$, denote $\hat{B}:=B \cap \Omega$ and choose $x\in \hat{B}$. Denote $f_1:=f\chi_{\{f>1/\beta_0\}}$, $f_2 := f-f_1$, and $D_i:= \frac{1}{|B|}\int_{\hat{B}} f_i \, dy$.

We first estimate $D_1$. Let $c_\omega:=\max\{1,[\omega]_{A_p}\}$. Now $\varphi$ satisfies \ainc{1} and thus $\varphi_B^-(\frac{1}{a}c_\omega^{-1}f_1)\leq c_\omega^{-1}\varphi_B^-(f_1)$. Then by Lemmas \ref{4.3.1} and \ref{lem1}, assuming $\varrho_\varphi^\omega(f_1)\leq\varrho_\varphi^\omega(f)\leq 1$, we obtain that
\begin{align*}
    \varphi_B^-(\tfrac{1}{2a}c_\omega^{-1}\beta_J D_1) &\leq \left( \frac{1}{|B|}\int_{\hat{B}} \varphi^-_B(\tfrac{1}{2a}c_\omega^{-1}f_1)^{\frac{1}{p}} \, dy \right)^p\\
    &\leq  \frac{[\omega]_{A_p}}{\omega(B)}\int_{\hat{B}} \varphi^-_B(\tfrac{1}{2a}c_\omega^{-1}f_1) \omega(y)\, dy \\
    &\leq \frac{1}{\omega(B)}\int_{\hat{B}} \frac{1}{2}\varphi^-_B(f_1) \omega(y)\, dy
    \\
    &\leq \frac{1}{2\omega(B)}\int_{\hat{B}} \varphi(y,f_1) \omega(y)\, dy < \frac{1}{\omega(B)},
\end{align*}
where we used \ainc{1} for the third inequality.
Denote $\beta'_J:=\frac{1}{2a}c_\omega^{-1}\beta_J$, and we have $\varphi_B^-(\beta'_J D_1)<\frac{1}{\omega(B)}$. We may also note that $\beta'_J\leq1$.

Suppose first that $\varphi^-_B(\beta'_JD_1)\geq 1$. This implies $\omega(B)<1$. Then there exists $y \in \hat{B}$ with $\varphi(y, \beta'_J D_1) \in [1, \frac{1}{\omega(B)}]$ and $\varphi(y,\beta'_J D_1)\leq 2 \varphi_B^-(\beta'_J D_1)$. Then by \aonew{} and Lemma \ref{4.3.1} for the third inequality we obtain
\begin{align*}
    \varphi(x,\beta_1 \beta'_J D_1)^{\frac{1}{p}}\leq \varphi(y,\beta'_J D_1)^{\frac{1}{p}}
    &\leq 2 \varphi_B^-(\beta'_J D_1)^{\frac{1}{p}}\\ &\leq \frac{2}{|B|} \int_{\hat{B}}\varphi_B^-(f_1)^{\frac{1}{p}}\, dy\leq \frac{2}{|B|}\int_{\hat{B}}\varphi(y,f_1)^{\frac{1}{p}}\, dy.
\end{align*}
Next we consider the case $\varphi^-_B(\beta'_J D_1) < 1$. By \azero{}, this implies that $\beta'_J D_1\leq \frac{1}{\beta_0}$. By \ainc{p} and \azero{}, we conclude that
\begin{equation*}
    \varphi(x, \beta_0^2\beta'_J D_1)^{\frac{1}{p}}\leq a^{\frac{1}{p}}\beta_0\beta'_J D_1 \varphi(x,\beta_0)^{\frac{1}{p}}\leq a^{\frac{1}{p}}\beta_0 \beta'_J D_1 \leq a^{\frac{1}{p}}D_1.
\end{equation*}
By \azero{}, $1 \leq \varphi(y,1/\beta_0)$. Since $f_1>1/\beta_0$, when it is non-zero, we find by \ainc{p} for the second inequality, that
\begin{equation*}
    D_1=\frac{1}{|B|}\int_{\hat{B}}f_1 \, dy \leq \frac{1}{|B|}\int_{\hat{B}} f_1 \varphi(y, \frac{1}{\beta_0})^{\frac{1}{p}}\, dy\leq \frac{a^{\frac{1}{p}}}{\beta_0|B|}\int_{\hat{B}} \varphi(y, f_1)^{\frac{1}{p}}\, dy.
\end{equation*}
In view of this and the conclusion of previous paragraph, we obtain final estimate for $f_1$
\begin{equation*}
    \varphi(x, \frac{\beta_0}{2a^{\frac{3}{p}}}\beta_0^2\beta_1\beta'_J D_1)^{\frac{1}{p}}\leq \frac{\beta_0}{2 a^{\frac{2}{p}}} \varphi(x, \beta_0^2 \beta_1 \beta'_J D_1)^{\frac{1}{p}}\leq \frac{1}{|B|}\int_{\hat{B}}\varphi(y,f_1)^{\frac{1}{p}} \, dy,
\end{equation*}
where we also used \ainc{p} for the first inequality.

Let us move to $f_2$. Since $f_2 \leq \frac{1}{\beta_0}$, by \azero{} we obtain $\varphi(y, \beta_0^2 f_2)\leq \varphi(y, \beta_0)\leq 1$. This allows us to use \atwow{}, and with Lemma \ref{4.3.1} for $\varphi(x, \cdot)^{\frac{1}{p}}$ (with $x$ fixed) we obtain 
\begin{align*}
    \varphi(x, \beta_J \beta_0^2 \beta_2 D_2)^{\frac{1}{p}}&\leq \frac{1}{|B|}\int_{\hat{B}} \varphi(x, \beta_0^2 \beta_2 f_2(y))^{\frac{1}{p}}\, dy \\
    & \leq \frac{1}{|B|}\int_{\hat{B}} \varphi(y,\beta_0^2 f_2)^{\frac{1}{p}}+h(x)^{\frac{1}{p}}+h(y)^{\frac{1}{p}}\, dy \\
    & \leq \frac{1}{|B|}\int_{\hat{B}} \varphi(y, f_2)^{\frac{1}{p}}+h(x)^{\frac{1}{p}}+h(y)^{\frac{1}{p}}\, dy.
\end{align*}
Since $\varphi^{\frac{1}{p}}$ is increasing,
\begin{align*}
    \varphi\left(x, \frac{\beta}{|B|}\int_{\hat{B}} f \, dy \right)^{\frac{1}{p}}&\leq \varphi(x, 2\beta \max\{D_1, D_2\})^{\frac{1}{p}} \\ &\leq \varphi(x, 2\beta D_1)^{\frac{1}{p}}+\varphi(x, 2\beta D_2)^{\frac{1}{p}}
\end{align*}
for any $\beta>0$.
Adding the estimates for $f_1$ and $f_2$, we conclude the proof  by choosing $\beta:=\frac{1}{2}\min\{2^{-1}a^{-3/p}\beta_0^3\beta_1\beta'_J, \ \beta_J\beta_0^2\beta_2\}$.
\end{proof}

This key estimate gives us the following corollary.

\begin{cor}\label{corkey}
Let $\varphi \in \phiw(\Omega)$ satisfy \azero{}, \aonew{}, \atwow{} and \ainc{p} with $p\in (0, \infty)$. Let $\omega \in A_p$. Then there exists $\beta>0$ and $h\in L^1(\Omega,\omega)\cap L^{\infty}(\Omega)$ such that
\begin{equation*}
    \varphi(x,\beta Mf(x))^{\frac{1}{p}}\lesssim M(\varphi(\cdot,f)^{\frac{1}{p}})(x)+M(h^{\frac{1}{p}})(x)
\end{equation*}
 for every $f\in L^{\varphi}(\Omega,\omega)$ with $\varrho_\varphi^{\omega}(f)\leq 1$.
\end{cor}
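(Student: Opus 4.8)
The plan is to read the corollary off directly from Theorem~\ref{key}: for each $x$ we apply that estimate on a ball $B\ni x$ whose average of $|f|$ almost realises the supremum $Mf(x)$, and then bound the three terms on the resulting right-hand side by the corresponding maximal functions evaluated at $x$. Let $\beta$ and $h\in L^1(\Omega,\omega)\cap L^\infty(\Omega)$ be the constant and function produced by Theorem~\ref{key}; the corollary will hold with this same $h$, with $\beta$ replaced by $\tfrac12\beta$, and with implicit constant $2$. Fix $f\in L^\varphi(\Omega,\omega)$ with $\varrho_\varphi^\omega(f)\le1$ and fix $x\in\Omega$; the case $Mf(x)=0$ is trivial, so assume $Mf(x)>0$. (Here the maximal functions of the nonnegative functions $\varphi(\cdot,|f|)^{1/p}$ and $h^{1/p}$ are read via the defining supremum, with the value $+\infty$ permitted; note $h^{1/p}\in L^\infty(\Omega)\subset L^1_{\loc}(\Omega)$.)

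Suppose first that $Mf(x)<\infty$. By definition of $M$ choose an open ball $B\ni x$ with $\frac{1}{|B|}\int_{B\cap\Omega}|f|\,dy\ge\tfrac12 Mf(x)$. Since $t\mapsto\varphi(x,t)$ is increasing,
\[
\varphi\!\left(x,\tfrac{\beta}{2}Mf(x)\right)^{1/p}\le\varphi\!\left(x,\tfrac{\beta}{|B|}\int_{B\cap\Omega}|f|\,dy\right)^{1/p},
\]
and Theorem~\ref{key}, applied to this $B$, bounds the right-hand side by
\[
\frac{1}{|B|}\int_{B\cap\Omega}\varphi(y,|f|)^{1/p}\,dy+h(x)^{1/p}+\frac{1}{|B|}\int_{B\cap\Omega}h(y)^{1/p}\,dy.
\]
Because $x\in B$, the first and third terms are at most $M(\varphi(\cdot,|f|)^{1/p})(x)$ and $M(h^{1/p})(x)$ respectively, and $h(x)^{1/p}\le M(h^{1/p})(x)$ for a.e.\ $x$ by the Lebesgue differentiation theorem. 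Hence $\varphi(x,\tfrac{\beta}{2}Mf(x))^{1/p}\le M(\varphi(\cdot,|f|)^{1/p})(x)+2M(h^{1/p})(x)$ for a.e.\ such $x$, which is the claim after relabelling $\tfrac12\beta$ as $\beta$. If instead $Mf(x)=\infty$, then for every $N$ there is a ball $B\ni x$ with $\frac{1}{|B|}\int_{B\cap\Omega}|f|\,dy>N$, and the same application of Theorem~\ref{key} gives $\varphi(x,\beta N)^{1/p}\le M(\varphi(\cdot,|f|)^{1/p})(x)+h(x)^{1/p}+M(h^{1/p})(x)$; letting $N\to\infty$ and using $\varphi(x,t)\to\infty$ forces the right-hand side to be infinite, so the asserted inequality holds there as well.

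Since the argument is essentially a repackaging of Theorem~\ref{key}, I do not anticipate a genuine obstacle: the only facts used beyond that theorem are the two elementary observations $\frac{1}{|B|}\int_{B\cap\Omega}g\le Mg(x)$ for $x\in B$, $g\ge0$, and $h(x)^{1/p}\le M(h^{1/p})(x)$ a.e. If one prefers to avoid splitting into the finite and infinite cases, one can instead pick for each $k\in\N$ a ball $B_k\ni x$ with average exceeding $(1-\tfrac1k)Mf(x)$ and let $k\to\infty$, using only monotonicity of $\varphi(x,\cdot)$; this variant also makes clear that no continuity hypothesis on $\varphi$ is needed.
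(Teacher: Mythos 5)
Your argument is correct and is exactly the standard deduction the paper intends: the paper offers no written proof of Corollary~\ref{corkey}, simply noting that it follows from Theorem~\ref{key}, and your write-up (near-optimal ball for the supremum, bounding the averages by $M(\varphi(\cdot,|f|)^{1/p})(x)$ and $M(h^{1/p})(x)$, and $h(x)^{1/p}\le M(h^{1/p})(x)$ a.e.) is precisely that routine passage to the supremum. No issues.
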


Now we are ready to prove the main result.

\begin{thm}\label{lause}
    Let $\varphi \in \phiw(\Omega)$ satisfy \azero{}, \aonew{}, \atwow{} and \ainc{p}, with $p\in (1,\infty)$. If $\omega \in A_p$, then for all $f\in L^{\varphi}(\Omega, \omega)$,
    \begin{equation*}
        \|Mf\|_{L^\varphi(\Omega, \omega)}\lesssim \|f\|_{L^\varphi(\Omega, \omega)}.
    \end{equation*}
\end{thm}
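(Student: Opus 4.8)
The plan is to reduce the norm inequality to the modular inequality from Corollary \ref{corkey}, and then to invoke the classical Muckenhoupt bound (Lemma \ref{clas-max}) on the auxiliary functions $\varphi(\cdot,f)^{1/p}$ and $h^{1/p}$, which live in the ordinary weighted Lebesgue space $L^p(\Omega,\omega)$ because $\omega \in A_p$ with $p>1$.

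First I would normalize: by homogeneity of the norm it suffices to show that there is a constant $c>0$ such that $\varrho_\varphi^\omega(f) \le 1$ implies $\varrho_\varphi^\omega(c\, Mf) \le 1$; the conclusion then follows from the unit ball property (Lemma \ref{ubp}) applied to $f/\|f\|_{L^\varphi(\Omega,\omega)}$ together with scaling of $M$. So fix $f$ with $\varrho_\varphi^\omega(f)\le 1$ and let $\beta$ and $h$ be as in Corollary \ref{corkey}. Raising the pointwise inequality there to the $p$-th power and integrating against $\omega$ gives
\begin{equation*}
\int_\Omega \varphi(x,\beta Mf(x))\,\omega(x)\,dx \lesssim \int_\Omega \big[M(\varphi(\cdot,f)^{1/p})(x)\big]^p \omega(x)\,dx + \int_\Omega \big[M(h^{1/p})(x)\big]^p \omega(x)\,dx.
\end{equation*}
Since $\omega \in A_p$ and $p>1$, Lemma \ref{clas-max} bounds each maximal-function term by the corresponding unweighted-maximal-free integral: the first is $\lesssim \int_\Omega \varphi(x,f(x))\,\omega(x)\,dx = \varrho_\varphi^\omega(f) \le 1$, and the second is $\lesssim \int_\Omega h(x)\,\omega(x)\,dx = \|h\|_{L^1(\Omega,\omega)} < \infty$, a fixed finite constant independent of $f$. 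Hence $\varrho_\varphi^\omega(\beta Mf) \le C_0$ for some absolute constant $C_0 \ge 1$.

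Finally I would absorb the constant $C_0$. Using \ainc{p} (with constant $a$), for any $\lambda \in (0,1]$ one has $\varphi(x,\lambda t) \le a\lambda^p \varphi(x,t)$, so choosing $\lambda = (C_0 a)^{-1/p} \le 1$ gives $\varrho_\varphi^\omega(\lambda \beta Mf) \le a\lambda^p \varrho_\varphi^\omega(\beta Mf) \le a\lambda^p C_0 = 1$. Thus $c := \lambda\beta$ works, giving $\|Mf\|_{L^\varphi(\Omega,\omega)} \le 1/c$ whenever $\varrho_\varphi^\omega(f)\le 1$; rescaling yields $\|Mf\|_{L^\varphi(\Omega,\omega)} \lesssim \|f\|_{L^\varphi(\Omega,\omega)}$ for all $f \in L^\varphi(\Omega,\omega)$.

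I expect the only genuinely delicate point to be bookkeeping with the normalization and the additive constant $\|h\|_{L^1(\Omega,\omega)}$: because the estimate in Corollary \ref{corkey} is only valid under $\varrho_\varphi^\omega(f)\le1$ and carries an additive (not homogeneous) $h$-term, one must be careful that the constant $C_0$ is truly independent of $f$ and that the $p>1$ hypothesis is used exactly where the weighted $L^p$ boundedness of $M$ is invoked — the case $p=1$ is excluded precisely here, even though the key estimate itself holds for all $p\in(0,\infty)$. Everything else is the standard modular-to-norm machinery.
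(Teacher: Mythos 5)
Your proof is correct and follows essentially the same route as the paper: normalize to $\varrho_\varphi^\omega(f)\le 1$, apply Corollary \ref{corkey}, raise to the $p$-th power, integrate against $\omega$, invoke Lemma \ref{clas-max} (which is exactly where $p>1$ is needed), and absorb the resulting constant via the almost-increasing condition before returning to the norm with the unit ball property. The only cosmetic differences are that the paper absorbs the constant using \ainc{1} rather than \ainc{p}, and that in the rescaling step it divides by $2\|f\|_{L^\varphi(\Omega,\omega)}$ rather than $\|f\|_{L^\varphi(\Omega,\omega)}$ so that the norm is strictly below one and Lemma \ref{ubp} yields $\varrho_\varphi^\omega\le 1$ without assuming left-continuity.
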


\begin{proof}    
    Let us assume that $\norm{f}_{L^\varphi(\Omega, \omega)}< 1$. 
     From the unit ball property (Lemma \ref{ubp}) we have that $\varrho_{\varphi}^{\omega}(f)\leq 1$. 
    Then by Corollary \ref{corkey} we obtain
    \begin{equation*}
        \varphi(x,\beta  Mf(x))^{\frac{1}{p}}\lesssim M(\varphi(\cdot, f)^{\frac{1}{p}})(x)+M(h^{\frac{1}{p}})(x).
    \end{equation*}
    Multiplying both sides with $\omega^{1/p}$, raising  to the power $p$ and integrating, we find that 
    \begin{align*}
        \int_\Omega \varphi(x,\beta Mf(x)) \omega(x) \, dx \lesssim  \int_\Omega [M(\varphi(\cdot, f)^{\frac{1}{p}})&(x)]^p \omega(x) \, dx \\ 
        &+  \int_\Omega [M(h^{\frac{1}{p}})(x)]^p \omega(x) \, dx 
    \end{align*}
Since $\omega \in A_p$ we have by Lemma \ref{clas-max}  that
\begin{align*}
    \int_\Omega \varphi(x,\beta  Mf(x)) \omega(x) \, dx & \lesssim \int_\Omega \varphi(x, f)\omega(x)\, dx + \int_\Omega h(x)\omega(x)\, dx \\
    &= \varrho^\omega_{\varphi}( f) + \norm{h}_{L^1(\Omega, \omega)}.
\end{align*}
Thus $\varrho^\omega_\varphi(\beta Mf)\leq c(1+\norm{h}_{L^1(\Omega, \omega)})=: c_1$ and by (aInc)$_1$, $\varrho^\omega_\varphi(\frac{\beta}{a c_1}Mf)\leq1$. Again by the unit ball property we have that $\norm{Mf}_{L^\varphi(\Omega, \omega)}\leq \frac{ac_1}{\beta}\lesssim 1$ and the case $\norm{f}_{L^\varphi(\Omega, \omega)}< 1$ is complete. 

If $\norm{f}_{L^\varphi(\Omega, \omega)}\geq 1$, then we can reduce the claim to the previous case by considering the function $g:=\frac{f}{2\norm{f}_{L^\varphi(\Omega, \omega)}}$ with norm less than one.
\end{proof}

\section{Special cases}

\subsection*{Variable exponent Lebesgue spaces}

We obtain variable exponent \\ Lebesgue spaces when $\varphi(x,t)=t^{p(x)}$ for some measurable function $p:\Omega \to [1,\infty]$. Here we interpret $t^\infty:=\infty\chi_{(1,\infty)}(t)$ and $t^\frac{1}{\infty}:=\chi_{(0,\infty)}(t)$. Next, we present the standard assumptions for the exponent $p$: local $\log$-Hölder continuity and $\log$-Hölder decay. If we assume any weaker modulus of continuity than $\log$-Hölder continuity, then the maximal operator $M$ need not be bounded \cite{RPex}. In this sense the $\log$-Hölder continuity is necessary for boundedness of $M$. For more information on variable exponent spaces, see \cite{ves, cruz-uribe2013variable}.

\begin{defn}
If there exists $c_1>0$ such that the measurable function $p:\Omega \to [1, \infty]$ satisfies
$$\left|\frac{1}{p(x)}-\frac{1}{p(y)}\right|\leq \frac{c_1}{\log(\mathrm{e}+1/|x-y|)}$$ for every $x,y\in \Omega$, then we say that $\frac{1}{p}$ is \textit{locally} $\log$-Hölder \textit{continuous} on $\Omega$, $\frac{1}{p}\in C^{\log}$. 

If there exist $p_\infty\in[1,\infty]$ and $c_2>0$ such that $$\left|\frac{1}{p(x)}-\frac{1}{p_\infty}\right|\leq \frac{c_2}{\log(\mathrm{e}+|x|)}$$  for every $x \in \Omega$, then we say that $\frac{1}{p}$ satisfies the $\log$-Hölder \textit{decay condition}.
\end{defn}

 If $\frac{1}{p}$ satisfies both of the conditions above, we denote $\frac{1}{p}\in \mathcal{P}^{\log}(\Omega)$. We also denote $p^+_B:=\esssup_{x \in B\cap\Omega} p(x)$ and  $p^-_B:=\essinf_{x \in B\cap\Omega} p(x)$.

\begin{rem}
    The $\log$-Hölder decay condition can be weakened by a condition introduced by A. Nekvinda in \cite{Nekvinda}. The decay condition implies Nekvinda's condition which states that there exists $\beta>0$ such that 
    \begin{equation*}
        \int_\Omega\beta^{\frac{1}{|\frac{1}{p(x)}-\frac{1}{p_\infty}|}}\, dx < \infty.
    \end{equation*}
    This remains true also in the weighted case, as will be seen in Lemma \ref{wn}.
    
\end{rem}

Next we are going to link these conditions to the ones used in weighted generalized Orlicz space.

\begin{lem}[Lemma 7.1.1, \cite{Og}]
    Let $\frac{1}{p}\in \mathcal{P}^{\log}(\Omega)$. Then the $\Phi$-function $\varphi(x,t)=t^{p(x)}$ satisfies \azero{} and \inc{1}.
\end{lem}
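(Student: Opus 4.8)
The hypothesis $\tfrac1p\in\mathcal{P}^{\log}(\Omega)$ is in fact much stronger than what is needed here: \azero{} and \inc{1} hold for \emph{every} measurable exponent $p:\Omega\to[1,\infty]$, and the local $\log$-Hölder continuity and the $\log$-Hölder decay of $\tfrac1p$ will only be used later (to verify \aonew{} and \atwow{}). So the plan is simply a pointwise verification, the one delicate point being the endpoint value $p(x)=\infty$ together with the convention $t^\infty=\infty\chi_{(1,\infty)}(t)$ (so that $t\mapsto\varphi(x,t)$ is an increasing function vanishing at $0$ and tending to $\infty$).

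For \inc{1} I would write $\tfrac{\varphi(x,t)}{t}=t^{p(x)-1}$ and observe that this is non-decreasing on $(0,\infty)$, because $p(x)-1\ge 0$; when $p(x)=\infty$ the quotient equals $\infty\chi_{(1,\infty)}(t)$, which is non-decreasing as well. Hence $t\mapsto\varphi(x,t)/t$ is increasing for a.e.\ $x\in\Omega$, i.e.\ \inc{1} holds with constant $1$ (in particular \ainc{1} holds, so $\varphi$ is a weak $\Phi$-function).

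For \azero{} I would fix $\beta_0:=\tfrac12\in(0,1)$ and check the two inequalities pointwise. If $p(x)\in[1,\infty)$, then $\varphi(x,\beta_0)=\beta_0^{p(x)}\le\beta_0\le 1$ and $\varphi(x,\tfrac1{\beta_0})=\beta_0^{-p(x)}\ge\beta_0^{-1}\ge 1$. If $p(x)=\infty$, then, since $\beta_0<1<\tfrac1{\beta_0}$, the convention gives $\varphi(x,\beta_0)=\infty\chi_{(1,\infty)}(\beta_0)=0\le 1$ and $\varphi(x,\tfrac1{\beta_0})=\infty\chi_{(1,\infty)}(\tfrac1{\beta_0})=\infty\ge 1$. (Note that the choice $\beta_0=1$ would fail in this last case, so it is essential to take $\beta_0$ strictly below $1$.) Thus \azero{} holds, which completes the argument.

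I do not anticipate any real obstacle; alternatively, the statement can be quoted directly from \cite{Og}, Lemma 7.1.1.
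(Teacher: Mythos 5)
Your verification is correct and is essentially the standard elementary argument behind the cited Lemma 7.1.1 of \cite{Og} (the paper itself only quotes the result, so there is no separate in-paper proof to diverge from). Your side remark that \azero{} and \inc{1} require only $p\ge 1$ measurable, with the $\log$-Hölder hypotheses reserved for \aonew{} and \atwow{}, and your care with the convention $t^\infty=\infty\chi_{(1,\infty)}(t)$ (forcing $\beta_0<1$) are both accurate.
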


\begin{lem}\label{wh}
Let $\frac{1}{p}\in \mathcal{P}^{\log}(\Omega)$ and $\omega \in A_\infty$. Then for all $B\subset \Rn$ we have
\[\omega(B)^{\frac{1}{p_B^+}-\frac{1}{p_B^-}}\lesssim 1.\]
\end{lem}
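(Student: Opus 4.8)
The plan is to reduce the inequality to a pointwise estimate on the exponent and then feed in Lemma~\ref{bxy} to control $\omega(B)$ from below. Write $B=B(x_0,r)$ and set $\delta_B:=\tfrac1{p_B^-}-\tfrac1{p_B^+}$, so that $\delta_B\in[0,1]$ (as $1/p$ takes values in $[0,1]$) and the quantity to bound is $\omega(B)^{1/p_B^+-1/p_B^-}=\omega(B)^{-\delta_B}$. Since $\omega\in A_\infty$ there is $q\in(1,\infty)$ with $\omega\in A_q$ (recall $A_1\subset A_q$). Fixing once and for all the reference ball $B_0=B(0,1)$, on which $\omega(B_0)\in(0,\infty)$ since $A_q$-weights are positive a.e., Lemma~\ref{bxy} gives $\omega(B)\gtrsim\omega(B_0)\,\rho^{-qn}$ where $\rho:=\tfrac{|x_0|+r+1}{r}>1$. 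Hence $\omega(B)^{-\delta_B}\lesssim\rho^{\,qn\,\delta_B}=\exp\bigl(qn\,\delta_B\log\rho\bigr)$, the implicit constant being harmless because $\delta_B\le1$. So it suffices to show $\delta_B\log\rho\le C$ with $C$ independent of $B$.

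The next step is to collect the two elementary bounds on $\delta_B$ coming from $\tfrac1p\in\mathcal P^{\log}(\Omega)$. Applying local $\log$-Hölder continuity to a.e.\ pair $x,y\in B\cap\Omega$ (so $|x-y|\le\diam B=2r$) and passing to the ess sup / ess inf gives $\delta_B\le\tfrac{c_1}{\log(e+1/(2r))}$; applying the decay condition to such $x,y$ in the case $|x_0|>2r$ (so $|x|,|y|\ge|x_0|-r>|x_0|/2$) gives $\delta_B\le\tfrac{2c_2}{\log(e+|x_0|/2)}$; and $\delta_B\le1$ always. I will also use the factorization $\rho\le\bigl(1+\tfrac{|x_0|}{r}\bigr)\bigl(1+\tfrac1r\bigr)$ (the two sides differ by $|x_0|/r^2\ge0$), so that $\log\rho\le\log\bigl(1+\tfrac{|x_0|}{r}\bigr)+\log\bigl(1+\tfrac1r\bigr)$ and it is enough to bound $\delta_B\log\bigl(1+\tfrac1r\bigr)$ and $\delta_B\log\bigl(1+\tfrac{|x_0|}{r}\bigr)$ separately.

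For the first term: if $r\ge1$ then $\log(1+1/r)\le\log2$ and $\delta_B\le1$; if $r<1$ then $\log(1+1/r)\le\log(2/r)$ is comparable to $\log(e+1/(2r))$, which the local $\log$-Hölder bound on $\delta_B$ cancels. For the second term: if $|x_0|\le2r$ then $\log(1+|x_0|/r)\le\log3$ and $\delta_B\le1$; if $|x_0|>2r$ then $\log(1+|x_0|/r)\le\log2+\log|x_0|+\log(1/r)$, and the $\log|x_0|$ piece is absorbed by the decay bound (legitimate exactly because $|x_0|>2r$, using $\log(e+|x_0|)\le2\log(e+|x_0|/2)$), while the $\log(1/r)$ piece is absorbed by the local $\log$-Hölder bound when $r<1$ and is nonpositive when $r\ge1$. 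Collecting the cases yields $\delta_B\log\rho\le C$ with $C$ depending only on $n$, the $\log$-Hölder constants of $1/p$, and $[\omega]_{A_q}$, and therefore $\omega(B)^{1/p_B^+-1/p_B^-}\lesssim1$.

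The only place anything beyond the definitions enters is the reduction in the first paragraph: Lemma~\ref{bxy} provides the lower bound $\omega(B)\gtrsim\rho^{-qn}$ that converts ``$\omega(B)$ is small'' into the quantity $\log\rho$, which the two $\log$-Hölder conditions are precisely tailored to defeat. The rest is routine but a little fiddly bookkeeping with logarithms; I anticipate no real obstacle, only the need to keep the decay estimate restricted to the range $|x_0|>2r$ so that every point of $B$ is genuinely far from the origin.
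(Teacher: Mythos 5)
Your proof is correct and follows essentially the same route as the paper's: both lower-bound $\omega(B)$ via Lemma~\ref{bxy} against the reference ball $B(0,1)$ and then split the resulting factor $\bigl(\tfrac{|x_0|+r+1}{r}\bigr)^{qn\delta_B}$ into a radius part killed by local $\log$-H\"older continuity and a distance-from-origin part killed by the decay condition. Your version merely spells out the case analysis ($r\lessgtr 1$, $|x_0|\lessgtr 2r$) that the paper compresses into the single estimate $\tfrac{r}{|x|+r+1}\ge\tfrac12\tfrac{r}{\max\{r,1\}}(|x|+1)^{-1}$; in particular your restriction of the decay estimate to the regime $|x_0|>2r$ is a welcome precision.
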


\begin{proof}
Let $B:=B(x,r)$. Since $\omega \in A_\infty$, there exists $q>1$ such that $\omega \in A_q$.
    By Lemma \ref{bxy}
\begin{equation*}\label{2.4}
        \omega(B(x,r))\gtrsim \omega(B(0,1))\left(\frac{r}{|x|+r+1}\right)^{qn}.
    \end{equation*}
    With estimate $\frac{r}{|x|+r+1}\geq \frac{r}{|x|+2\max\{r,1\}}\geq \frac{1}{2}\frac{r}{\max\{r,1\}}(|x|+1)^{-1}$ we have that
    \begin{align*}
        \omega(B(x,r))^{\frac{1}{p_B^+ }- \frac{1}{p_B^-}}\lesssim \omega(B(0,1))^{\frac{1}{p_B^+ }- \frac{1}{p_B^-}}(\tfrac{r}{\max\{r,1\}})^{\left(\frac{1}{p_B^+ }- \frac{1}{p_B^-}\right)qn}(|x|+1)^{\left(\frac{1}{p_B^-}-\frac{1}{p_B^+ }\right)qn}.
    \end{align*}
    Now the first term is a constant, second is either $1$ or bounded by $\log$-Hölder continuity, and the last term is bounded by $\log$-Hölder decay condition.
\end{proof}

\begin{lem}
    Let $\omega \in A_\infty$ and $\varphi(x,t)=t^{p(x)}$, where $\frac{1}{p}\in \mathcal{P}^{\log}(\Omega)$. Then $\varphi$ satisfies \aonew. 
\end{lem}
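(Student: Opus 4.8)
The plan is to verify the defining inequality of \aonew{} directly: given a ball $B$ with $\omega(B) \le 1$, points $x, y \in B \cap \Omega$ and $t$ with $\varphi(y,t) = t^{p(y)} \in [1, \frac{1}{\omega(B)}]$, I must produce a constant $\beta_1 \in (0,1]$, independent of all these choices, such that $(\beta_1 t)^{p(x)} \le t^{p(y)}$. Since $t^{p(y)} \ge 1$ forces $t \ge 1$, we also have $t^{p(x)} \ge 1$, and the inequality is equivalent to $\beta_1^{p(x)} \le t^{p(y) - p(x)}$, i.e. after taking logarithms, to $p(x) \log \frac{1}{\beta_1} \le (p(y) - p(x)) \log t$. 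It therefore suffices to bound $|p(y) - p(x)| \log t$ from above by a constant multiple of $\log \frac{1}{\beta_1}$ (choosing $\beta_1$ small afterwards); the sign works out because when $p(y) \ge p(x)$ the right side is nonnegative and there is nothing to fear, so the real content is the case $p(x) > p(y)$, where I need $(p(x) - p(y)) \log t \le C$.

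The key steps: first, reduce from $p(x), p(y)$ to $\frac{1}{p(x)}, \frac{1}{p(y)}$, since $C^{\log}$ is phrased in terms of $1/p$; on the relevant range $p \in [1,\infty]$ one has $|p(x) - p(y)| \lesssim p(x) p(y) \left|\frac{1}{p(x)} - \frac{1}{p(y)}\right|$ when both are finite, and the infinite cases are handled by the convention $t^\infty = \infty \chi_{(1,\infty)}$ together with left-continuity — these degenerate cases should be checked but are routine. Second, use the constraint $t^{p(y)} \le \frac{1}{\omega(B)}$ to get $\log t \le \frac{1}{p(y)} \log \frac{1}{\omega(B)} \le \log \frac{1}{\omega(B)}$. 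Third, split $\log\frac{1}{\omega(B)}$ according to the radius $r$ of $B = B(x_0, r)$: for small $r$ (say $r \le 1$), $\log\frac{1}{\omega(B)} \lesssim \log\frac{1}{r} \lesssim \log(\mathrm{e} + \frac{1}{|x-y|})$ because $|x-y| \le 2r$, and this pairs with local $\log$-Hölder continuity of $1/p$; for large $r$, I need to control $\log\frac{1}{\omega(B)}$, but since $\omega \in A_\infty$ and $A_\infty$ weights have at most polynomial growth (the second Remark after Lemma \ref{bxy}), $\omega(B) \gtrsim$ a positive constant is false in general — instead I should use Lemma \ref{bxy} to compare $\omega(B(x_0,r))$ with $\omega(B(0,1))$, giving $\omega(B) \gtrsim (r/(|x_0|+r+1))^{qn} \gtrsim (|x_0|+1)^{-qn}$ for $r \ge 1$, hence $\log\frac{1}{\omega(B)} \lesssim \log(|x_0| + 1) \lesssim \log(\mathrm{e} + |x|)$ since $|x - x_0| \le r$ — wait, that bound needs $r$ comparable to $|x_0|$; more carefully, for $r \ge 1$ one gets $\log\frac1{\omega(B)}\lesssim \log(\mathrm e+|x_0|+r)$, and this should be absorbed by combining the decay conditions at $x$ and $y$ relative to $p_\infty$. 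In fact the cleanest route is exactly the argument already used in the proof of Lemma \ref{wh}: there the quantity $\omega(B)^{1/p_B^+ - 1/p_B^-}$ was shown to be $\lesssim 1$, and $|p_B^+ - p_B^-|$-type control of $\log$ against $\log\frac1{\omega(B)}$ is the same estimate.

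Concretely I would argue: $\beta_1^{p(x)} t^{p(x)} \le t^{p(y)}$ iff $\beta_1^{p(x)} \le t^{p(y)-p(x)}$; when $p(y) \ge p(x)$ this holds for any $\beta_1 \le 1$ since $t \ge 1$; when $p(x) > p(y)$, writing $s := t^{p(y)} \in [1, \omega(B)^{-1}]$ we have $t^{p(y)-p(x)} = s^{1 - p(x)/p(y)} = s^{-(p(x)-p(y))/p(y)} \ge \omega(B)^{(p(x)-p(y))/p(y)}$, so it suffices that $\beta_1^{p(x)} \le \omega(B)^{(p(x)-p(y))/p(y)}$, i.e. $p(x)\log\frac1{\beta_1} \le \frac{p(x)-p(y)}{p(y)} \log\frac1{\omega(B)}$, i.e. $\log\frac1{\beta_1} \le \left(\frac1{p(y)} - \frac1{p(x)}\right)\log\frac1{\omega(B)}$ (dividing by $p(x)$, using $\frac{p(x)-p(y)}{p(x)p(y)} = \frac1{p(y)}-\frac1{p(x)}$). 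So I must show $\left(\frac1{p(y)}-\frac1{p(x)}\right)\log\frac1{\omega(B)}$ is bounded below by a constant, equivalently $\left|\frac1{p(x)}-\frac1{p(y)}\right|\log\frac1{\omega(B)} \lesssim 1$; then choosing $\beta_1 := \mathrm{e}^{-C}$ for that implied constant $C$ finishes it. The inequality $\left|\frac1{p(x)}-\frac1{p(y)}\right|\log\frac1{\omega(B)} \lesssim 1$ is precisely the combination of $\log$-Hölder continuity/decay of $1/p$ with the polynomial-growth bound for $\omega \in A_\infty$ from Lemma \ref{bxy}, split into $r \le 1$ and $r > 1$ exactly as in Lemma \ref{wh}.

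The main obstacle I anticipate is the large-ball regime: controlling $\log\frac{1}{\omega(B(x_0,r))}$ for $r$ large in terms of the decay moduli of $1/p$ at the points $x,y \in B$. The subtlety is that $x$ and $y$ may both be far from the origin while $|x-y|$ is small, so one cannot naively reduce to local continuity; one uses Lemma \ref{bxy} to bound $\omega(B)$ below by roughly $(|x_0|+1)^{-qn}$ up to the radius, and then estimates $\left|\frac1{p(x)}-\frac1{p(y)}\right| \le \left|\frac1{p(x)}-\frac1{p_\infty}\right| + \left|\frac1{p(y)}-\frac1{p_\infty}\right| \lesssim \frac1{\log(\mathrm e+|x|)} + \frac1{\log(\mathrm e+|y|)}$, noting $|x|, |y| \gtrsim |x_0| - r$; this requires a small case analysis depending on whether $r \lesssim |x_0|$ or $r \gtrsim |x_0|$, but it is the same bookkeeping already carried out for Lemma \ref{wh}. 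A secondary but genuinely necessary point is the careful treatment of $p(x) = \infty$ or $p(y) = \infty$, where the inequality $(\beta_1 t)^{p(x)} \le t^{p(y)}$ must be read through the conventions $t^\infty = \infty\chi_{(1,\infty)}(t)$, $\frac1{p} = 0$; here $\varphi(y,t) \ge 1$ with $p(y) = \infty$ forces $t > 1$ but allows $t$ arbitrarily close to $1$, and one checks the constraint $t^{p(y)} \le \omega(B)^{-1}$ still yields a usable upper bound after reinterpreting $\log t^{p(y)}$ as a limit, or simply handles $p(y)=\infty$ separately since then $\frac1{p(y)} = 0$ and the target inequality becomes $\log\frac1{\beta_1} \le -\frac1{p(x)}\log\frac1{\omega(B)} \le 0$, which fails unless $p(x) = \infty$ too — so in fact $p(y) = \infty$ combined with the decay/continuity of $1/p$ forces $p \equiv \infty$ near $y$ in the relevant sense, and this degenerate consistency is exactly what must be verified. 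I would present the finite case in full and dispatch the infinite cases with a brief remark invoking left-continuity and the conventions.
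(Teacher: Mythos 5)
Your argument is correct in substance and takes essentially the same route as the paper: reduce \aonew{} to the bound $\omega(B)^{\frac{1}{p(x)}-\frac{1}{p(y)}}\lesssim 1$ for $p(x)>p(y)$, which is exactly Lemma~\ref{wh} (whose proof already contains the small-ball/large-ball splitting and the use of Lemma~\ref{bxy} that you re-derive). The only caveat is a recurring sign slip in your logarithmic reformulation --- the requirement is $\log\frac{1}{\beta_1}\ge\bigl(\frac{1}{p(y)}-\frac{1}{p(x)}\bigr)\log\frac{1}{\omega(B)}$, i.e.\ that quantity must be bounded \emph{above} (as your ``equivalent'' form $\bigl|\frac{1}{p(x)}-\frac{1}{p(y)}\bigr|\log\frac{1}{\omega(B)}\lesssim 1$ correctly states, contradicting your ``bounded below''), and this same slip produces the spurious worry that $p(y)=\infty$ forces $p\equiv\infty$ nearby; in fact for $p(y)=\infty$ the hypothesis $\varphi(y,t)\in[1,\frac{1}{\omega(B)}]$ is vacuous since $\omega\in A_\infty$ gives $\omega(B)>0$, and with the correct inequality direction that case is harmless anyway.
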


\begin{proof}
    Assume that $B$ is a ball with $\omega(B)\leq1$ and $x,y\in B\cap \Omega$. We need to show that there exists $\beta_1$ such that $\phi(x,\beta_1 t)\leq \phi(y,t)$ when $\phi(y,t) \in [1, \frac{1}{\omega(B)}]$. This is equivalent to showing that $t^{\frac{p(x)-p(y)}{p(x)}}\lesssim 1$ when $t^{p(y)}\in [1, \frac{1}{\omega(B)}]$. We may assume that $p(x)>p(y)$, since the other case follows directly from $t>1$. Then by Lemma \ref{wh}, 
    \begin{equation*}
        t^{\frac{p(x)-p(y)}{p(x)}}\leq\omega(B)^{\frac{p(y)-p(x)}{p(x)p(y)}}\leq \omega(B)^{p_B^--p_B^+}\lesssim 1,
    \end{equation*}
    and the claim follows.
\end{proof}
The next lemma shows that the weighted Nekvinda decay condition follows.

\begin{lem}\label{wn}
    Let $\omega \in A_\infty$ and $\varphi(x,t)=t^{p(x)}$, where $\frac{1}{p}$ satisfies the $\log$-Hölder decay condition. Then there exists $\lambda\in(0,1)$ such that 
    \begin{equation*}
        \int_\Omega\lambda^{\frac{1}{|\frac{1}{p(x)}-\frac{1}{p_\infty}|}}\omega(x)\, dx < \infty.
    \end{equation*} 
\end{lem}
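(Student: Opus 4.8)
The plan is to estimate the integrand pointwise using the $\log$-Hölder decay of $\frac1p$, and then to control the resulting weighted integral by a convergent geometric series, using that an $A_\infty$ weight has at most polynomial growth as a measure. Fix the point $p_\infty\in[1,\infty]$ and the constant $c_2>0$ from the $\log$-Hölder decay condition. On the set where $\frac1{p(x)}=\frac1{p_\infty}$ the exponent is $+\infty$, so the integrand $\lambda^{1/|1/p(x)-1/p_\infty|}$ vanishes there (for $\lambda\in(0,1)$) and that set may be ignored. Elsewhere, taking reciprocals in the decay inequality gives $\frac{1}{|1/p(x)-1/p_\infty|}\ge \frac{\log(\mathrm{e}+|x|)}{c_2}$, whence, since $s\mapsto\lambda^s$ is decreasing,
\[
\lambda^{\frac{1}{|1/p(x)-1/p_\infty|}}\le \lambda^{\frac{\log(\mathrm{e}+|x|)}{c_2}}=(\mathrm{e}+|x|)^{-s},\qquad s:=\frac{\log(1/\lambda)}{c_2}>0,
\]
and $s$ can be made as large as we wish by choosing $\lambda$ small.

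Next I would record the growth of $\omega$. Since $\omega\in A_\infty$, pick $q>1$ with $\omega\in A_q$; applying Lemma \ref{bxy} with $x=y=0$, $r=1$ and $R\ge 1$ gives $\omega(B(0,R))\lesssim (1+R)^{qn}\lesssim R^{qn}$, i.e.\ the polynomial growth noted in the remark after Lemma \ref{bxy}.

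Now decompose $\Omega\subset\Rn$ into the unit ball $B(0,1)$ and the dyadic annuli $A_k:=\{x:2^k\le|x|<2^{k+1}\}$, $k\ge0$. On $B(0,1)$ the integrand is bounded by $1$ and $\omega$ is locally integrable, so that part is finite. On $A_k$ we have $(\mathrm{e}+|x|)^{-s}\lesssim 2^{-ks}$ and $\omega(A_k)\le\omega(B(0,2^{k+1}))\lesssim 2^{(k+1)qn}\lesssim 2^{kqn}$, so
\[
\int_{A_k}(\mathrm{e}+|x|)^{-s}\omega(x)\,dx\lesssim 2^{k(qn-s)},
\]
and $\sum_{k\ge0}2^{k(qn-s)}<\infty$ once $s>qn$. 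Thus it suffices to choose $\lambda\in(0,1)$ small enough that $\log(1/\lambda)>c_2qn$, which forces $s>qn$ and completes the proof.

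There is no genuine obstacle here; the one point that requires care is that the hypothesis $\omega\in A_\infty$ enters \emph{only} through the polynomial volume bound $\omega(B(0,R))\lesssim R^{qn}$, and it is precisely the competition between this polynomial growth and the power decay $(\mathrm{e}+|x|)^{-s}$ gained from the $\log$-Hölder condition that must be tuned by the choice of $\lambda$.
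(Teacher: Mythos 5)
Your proof is correct and follows essentially the same route as the paper: the pointwise bound $\lambda^{1/|1/p(x)-1/p_\infty|}\le(\mathrm{e}+|x|)^{-s}$ from the decay condition, the polynomial growth $\omega(B(0,R))\lesssim R^{qn}$ from Lemma \ref{bxy}, and an annular decomposition whose series converges once $\lambda$ is small enough. The only cosmetic difference is that you use dyadic annuli and a geometric series where the paper uses unit-width annuli $B(0,j)\setminus B(0,j-1)$ and the series $\sum_j j^{c\log\lambda+qn}$.
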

\begin{proof}
    By the $\log$-Hölder decay condition, $|\tfrac{1}{p(x)}-\tfrac{1}{p_\infty}|\leq \frac{c}{\log(e+|x|)}$.  Since $\omega \in A_\infty$, there exists $q>1$ such that $\omega \in A_q.$ Then by Lemma \ref{bxy}, $\omega(B(0,r))\lesssim r^{qn}$ when $r>1$. It follows that
\begin{align*}
 \int_\Omega\lambda^{\frac{1}{|\frac{1}{p(x)}-\frac{1}{p_\infty}|}}\omega(x)\, dx&\leq \sum_{j=1}^\infty\int_{B(0,j)\setminus B(0,j-1)}\lambda^{\frac{1}{|\frac{1}{p(x)}-\frac{1}{p_\infty}|}}\omega(x)\, dx \\
&\leq \sum_{j=1}^\infty\lambda^{c\log(e+j-1)}\int_{B(0,j)\setminus B(0,j-1)}\omega(x)\, dx \\
&\leq \sum_{j=1}^\infty j^{c\log\lambda}\omega(B(0,j)) \\
&\lesssim\sum_{j=1}^\infty j^{c\log\lambda+qn}<\infty,
\end{align*}
by choosing $\lambda<e^{-(\frac{qn+1}{c})}$.
\end{proof}

\begin{lem}
    Let $\omega \in A_\infty$ and $\varphi(x,t)=t^{p(x)}$, where $\frac{1}{p}$ satisfies the $\log$-H\"older decay condition.. Then $\varphi$ satisfies \atwow. 
\end{lem}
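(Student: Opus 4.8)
The plan is to verify \atwow{} through its reformulation in Lemma~\ref{a2eq}. Since $\frac1p$ satisfies the $\log$-Hölder decay condition, Lemma~\ref{wn} supplies a $\lambda\in(0,1)$ with $\int_\Omega\lambda^{1/|1/p(x)-1/p_\infty|}\omega(x)\,dx<\infty$. Assume first $p_\infty<\infty$. I would take the convex $\Phi$-function $\varphi_\infty(t):=t^{p_\infty}\in\phic$, set $\beta:=\lambda^{p_\infty}\in(0,1)$, and define
\[
 h(x):=\lambda^{\frac{1}{\left|\frac1{p(x)}-\frac1{p_\infty}\right|}},
\]
with the convention $h(x)=0$ where $\frac1{p(x)}=\frac1{p_\infty}$. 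Since $\bigl|\frac1{p(x)}-\frac1{p_\infty}\bigr|\le1$, we have $0\le h\le\lambda<1$, so $h\in L^\infty(\Omega)$, while $h\in L^1(\Omega,\omega)$ holds by the choice of $\lambda$. It then remains to check the two inequalities of Lemma~\ref{a2eq}; as $t^{p_\infty}\le1$ and $t^{p(x)}\le1$ are both equivalent to $t\le1$, these read
\[
 (\beta t)^{p(x)}\le t^{p_\infty}+h(x)\qquad\text{and}\qquad(\beta t)^{p_\infty}\le t^{p(x)}+h(x)
\]
for every $x\in\Omega$ and $t\in(0,1]$.

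For the first inequality, if $p(x)\ge p_\infty$ then $(\beta t)^{p(x)}\le t^{p(x)}\le t^{p_\infty}$ because $\beta,t\le1$, and $h$ is not needed. If $p(x)<p_\infty$, I would split on whether $t\ge\beta^{1/(p_\infty-p(x))}$. When it is, writing $t^{p(x)}=t^{p_\infty}t^{-(p_\infty-p(x))}$ gives $(\beta t)^{p(x)}\le\beta^{p(x)-1}t^{p_\infty}\le t^{p_\infty}$, using $p(x)\ge1$. When $t<\beta^{1/(p_\infty-p(x))}$, one has $(\beta t)^{p(x)}<\beta^{p(x)(p_\infty-p(x)+1)/(p_\infty-p(x))}$, and a short computation with logarithms shows this is at most $\lambda^{p(x)p_\infty/(p_\infty-p(x))}=h(x)$ precisely because $\beta=\lambda^{p_\infty}$ and $1\le p(x)<p_\infty$. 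The second inequality is handled the same way with the roles of $p(x)$ and $p_\infty$ interchanged: the threshold becomes $t\ge\beta^{p_\infty/(p(x)-p_\infty)}$, and in the small-$t$ case the bound $\beta^{p_\infty p(x)/(p(x)-p_\infty)}\le\lambda^{p_\infty p(x)/(p(x)-p_\infty)}=h(x)$ follows from $\beta=\lambda^{p_\infty}\le\lambda$.

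It remains to treat $p_\infty=\infty$, where $\varphi_\infty(t)=\infty\chi_{(1,\infty)}(t)$ and both constraints of Lemma~\ref{a2eq} again force $t\le1$, on which $\varphi_\infty(t)=0$. Here I would instead put $h(x):=\beta^{p(x)}$ for a sufficiently small $\beta\in(0,1)$: the decay condition gives $p(x)\ge\frac1c\log(\mathrm e+|x|)$, hence $h(x)\le(\mathrm e+|x|)^{(\log\beta)/c}$, and since $\omega\in A_\infty$ grows at most polynomially (as observed after Lemma~\ref{bxy}), the dyadic-annulus estimate from the proof of Lemma~\ref{wn} yields $h\in L^1(\Omega,\omega)\cap L^\infty(\Omega)$ once $\beta$ is small. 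The first inequality of Lemma~\ref{a2eq} then reduces to $(\beta t)^{p(x)}\le h(x)$ for $t\le1$, which holds since $(\beta t)^{p(x)}\le\beta^{p(x)}=h(x)$, and the second to the trivial $0\le\varphi(x,t)+h(x)$. I expect the only real difficulty to be the bookkeeping that lets a single constant $\beta$ serve both inequalities while keeping the error term $h$ integrable against $\omega$; this is exactly what the weighted Nekvinda condition of Lemma~\ref{wn} is designed to provide.
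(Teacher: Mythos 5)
Your proof is correct and follows essentially the same route as the paper: both verify \atwow{} via Lemma~\ref{a2eq} with $\varphi_\infty(t)=t^{p_\infty}$ and the error term $h(x)=\lambda^{1/|1/p(x)-1/p_\infty|}$ supplied by the weighted Nekvinda condition of Lemma~\ref{wn}. The only difference is that the paper obtains the pointwise bound $(\lambda t)^{p(x)}\le t^{p_\infty}+h(x)$ in one line from Young's inequality, whereas you reprove it by splitting on the size of $t$; your case analysis (including the $p_\infty=\infty$ case) checks out.
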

\begin{proof}
    By Lemma \ref{wn}, there exist $\lambda\in(0,1)$ and $p_\infty \in [1,\infty]$ such that 
    \begin{equation*}
        \int_\Omega\lambda^{\frac{1}{|\frac{1}{p(x)}-\frac{1}{p_\infty}|}}\omega(x)\, dx < \infty.
    \end{equation*}
    Let us assume that $\phi_\infty(t):=t^{p_\infty} \leq 1$. If $p(x)\geq p_{\infty}$, then $(\lambda t)^{p(x)}\leq t^{p_\infty}$. When $p(x)<p_\infty<\infty$, it follows from Young's inequality that 
    \begin{equation*}
        (\lambda t)^{p(x)} \leq t^{p_\infty} + \lambda^{\frac{1}{|\frac{1}{p(x)}-\frac{1}{p_\infty}|}}.
    \end{equation*}
    If $p_\infty=\infty$, then $t^{p_\infty}=0$ and $\lambda^{\frac{1}{|\frac{1}{p(x)}-\frac{1}{p_\infty}|}}=\lambda^{p(x)}$. The other case, where $\phi(x,t)=t^{p(x)}\leq 1$, is analogous. By Lemma \ref{a2eq} with \begin{equation*}
        h(x):=\lambda^{\frac{1}{|\frac{1}{p(x)}-\frac{1}{p_\infty}|}},
    \end{equation*}
    $\varphi$ satisfies \atwow{}.\end{proof}

\subsection*{Double phase spaces}

In the double phase case $\phi(x,t)=t^p+a(x)t^q$, with $1\leq p<q<\infty$, the central issue is the behavior 
of $a$ around the zero set $\{x: a(x)=0\}$. M. Colombo and C. Mingione \cite{ColM15a} 
found that the critical H\"older exponent with which $a$ must
approach zero is $\frac np (q-p)$, which also gives a sufficient
condition for (A1) to hold \cite{H15}. A similar observation applies in the weighted double phase case.

\begin{lem}
     Let $\varphi(x,t)=t^p+a(x)t^q$ with $1 \leq p < q < \infty $ and $a\in L^\infty(\Omega)$ non-negative. Then $\varphi$ satisfies \azero{}, \atwow{}, and \inc{p}.
\end{lem}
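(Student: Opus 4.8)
The plan is to verify the three conditions separately, exploiting the structure $\varphi(x,t)=t^p+a(x)t^q$ with $a\in L^\infty(\Omega)$. The condition \inc{p} is the easiest: the ratio $\frac{\varphi(x,t)}{t^p}=1+a(x)t^{q-p}$ is (genuinely, not just almost) increasing in $t$ for each fixed $x$ because $q>p$ and $a(x)\ge 0$, so \inc{p} holds with constant $1$. For \azero{}, I would choose a single $\beta_0\in(0,1]$ that works uniformly in $x$. Since $0\le a(x)\le\|a\|_{L^\infty}=:A$, we have $\beta_0^p\le\varphi(x,\beta_0)\le\beta_0^p+A\beta_0^q\le(1+A)\beta_0^p$ and $\varphi(x,\beta_0^{-1})\ge\beta_0^{-p}$; picking $\beta_0$ small enough that $(1+A)\beta_0^p\le 1$ (hence also $\beta_0^{-p}\ge 1$) gives $\varphi(x,\beta_0)\le 1\le\varphi(x,\beta_0^{-1})$ for a.e.\ $x$, which is \azero{}.

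The condition \atwow{} is the one requiring the least effort here, and I would obtain it through Lemma \ref{a2eq} with the choice $\varphi_\infty(t):=t^p$ and $h\equiv 0$. Indeed $\varphi_\infty\in\phiw$, and for a.e.\ $x$ and every $t>0$ we have $\varphi_\infty(t)=t^p\le t^p+a(x)t^q=\varphi(x,t)$, so the second inequality of Lemma \ref{a2eq} holds with $\beta=1$ unconditionally. For the first inequality I need $\varphi(x,\beta t)\le\varphi_\infty(t)=t^p$ whenever $t^p\le 1$, i.e.\ $\beta^p t^p+a(x)\beta^q t^q\le t^p$; since $t\le 1$ on that range and $a(x)\le A$, the left side is at most $(\beta^p+A\beta^q)t^p$, so any $\beta\le 1$ with $\beta^p+A\beta^q\le 1$ works (the same $\beta_0$ from \azero{} suffices). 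Taking the minimum of the two $\beta$'s and $h=0\in L^1(\Omega,\omega)\cap L^\infty(\Omega)$, Lemma \ref{a2eq} yields \atwow{}.

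I do not expect a serious obstacle: the only mild subtlety is that all constants ($\beta_0$, $\beta$, and the threshold in the Young-type estimate) must be chosen uniformly in $x$, which is exactly what $a\in L^\infty(\Omega)$ provides. Note that the statement deliberately omits \aonew{} — that is the condition sensitive to the Hölder regularity of $a$ near its zero set and to the weight, and is treated separately afterwards — so here I only assemble the three facts above. One could present the argument even more compactly by first proving \azero{} and then observing that the \atwow{} estimates are the same computation restricted to $t\le 1$, but I would keep the three verifications visibly separate for clarity.
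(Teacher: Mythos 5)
Your verification is correct and is exactly the standard computation that the paper delegates to Proposition 7.2.1 of \cite{Og}: \inc{p} from monotonicity of $1+a(x)t^{q-p}$, \azero{} from $\|a\|_\infty<\infty$, and \atwow{} via Lemma \ref{a2eq} with $\varphi_\infty(t)=t^p$ and $h\equiv 0$ (which makes the weight irrelevant). No gaps; you have simply written out the details the paper cites.
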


\begin{proof}
    The proof is the same as that of Proposition 7.2.1 from \cite{Og}.
\end{proof}
   
\begin{lem}
    Let $\varphi(x,t)=t^p+a(x)t^q$ with $1 \leq p < q < \infty $ and $a\in L^\infty(\Omega)$ be non-negative. Then $\varphi$ satisfies \aonew{} if and only if 
    \begin{equation*}
        a(x) \lesssim a(y) + \omega(B)^{\frac{q-p}{p}}
    \end{equation*}
    for every $x,y \in B\cap \Omega.$
\end{lem}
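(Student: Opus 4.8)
The plan is to read the \aonew{} condition for $\varphi(x,t)=t^p+a(x)t^q$ by comparing, at a well-chosen argument $t$, the two monomials $t^p$ and $a(x)t^q$, using that $s\mapsto\varphi(y,s)$ is a continuous strictly increasing bijection of $[0,\infty)$. For the \emph{sufficiency} direction, assume $a(x)\le C_0\bigl(a(y)+\omega(B)^{(q-p)/p}\bigr)$ for a.e.\ $x,y\in B\cap\Omega$ and every ball $B$. Fix $B$ with $\omega(B)\le 1$, points $x,y\in B\cap\Omega$, and $t>0$ with $\varphi(y,t)\in[1,1/\omega(B)]$, and expand $\varphi(x,\beta_1 t)=\beta_1^p t^p+a(x)\beta_1^q t^q$, inserting the hypothesis on $a(x)$. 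The term $\beta_1^p t^p$ is $\le\beta_1^p\varphi(y,t)$ since $t^p\le\varphi(y,t)$; the term $C_0\beta_1^q a(y)t^q$ is $\le C_0\beta_1^q\varphi(y,t)$ since $a(y)t^q\le\varphi(y,t)$; and for the last term I would use $t^p\le\varphi(y,t)\le 1/\omega(B)$ to get $t^{q-p}\le\omega(B)^{-(q-p)/p}$, so that $C_0\beta_1^q\omega(B)^{(q-p)/p}t^q=C_0\beta_1^q\omega(B)^{(q-p)/p}\,t^p\,t^{q-p}\le C_0\beta_1^q\varphi(y,t)$. Summing and using $\beta_1^q\le\beta_1^p$ for $\beta_1\le 1$ gives $\varphi(x,\beta_1 t)\le(1+2C_0)\beta_1^p\varphi(y,t)$, so $\beta_1:=(1+2C_0)^{-1/p}$ works; the case $\omega(B)=0$ is even easier, since then the hypothesis reads $a(x)\le C_0 a(y)$ and the $\omega(B)^{(q-p)/p}$-term is absent.

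For the \emph{necessity} direction, assume \aonew{} with constant $\beta_1$, and put $\delta:=2^{-q/p}\in(0,1)$. If $\omega(B)\ge\delta$ the desired inequality is immediate from $a\in L^\infty(\Omega)$, and if $\omega(B)=0$ it follows by dividing $\varphi(x,\beta_1 t)\le\varphi(y,t)$ by $t^q$ and letting $t\to\infty$. So suppose $0<\omega(B)<\delta$, and pick $t=t(y,B)>0$ with $\varphi(y,t)=1/\omega(B)$. Since $\varphi(y,cs)=c^p s^p+c^q a(y)s^q\ge c^q\varphi(y,s)$ for $c\in(0,1]$, the choice $\tilde t:=2^{-1/p}t$ gives $\varphi(y,\tilde t)\in[\delta/\omega(B),\,1/\omega(B)]\subset[1,1/\omega(B)]$, so \aonew{} yields $\varphi(x,\beta_1\tilde t)\le\varphi(y,\tilde t)\le 1/\omega(B)$. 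As $\varphi(x,\beta_1\tilde t)\ge a(x)(\beta_1\tilde t)^q$, this forces $a(x)\le\beta_1^{-q}2^{q/p}\,t^{-q}\omega(B)^{-1}$. To finish, I would split the identity $t^p+a(y)t^q=1/\omega(B)$: if $t^p\ge a(y)t^q$ then $t^p\ge\tfrac12\omega(B)^{-1}$, whence $t^{-q}\omega(B)^{-1}\le 2^{q/p}\omega(B)^{(q-p)/p}$; if $a(y)t^q>t^p$ then $a(y)t^q\ge\tfrac12\omega(B)^{-1}$, whence $t^{-q}\omega(B)^{-1}\le 2a(y)$. In either case $a(x)\lesssim a(y)+\omega(B)^{(q-p)/p}$ with a constant depending only on $p,q,\beta_1,\|a\|_{L^\infty(\Omega)}$.

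The main difficulty is locating the right test argument in the necessity part: naive scalings make $\varphi(y,t)$ overshoot the admissible ceiling $1/\omega(B)$, so one is forced to pin $t$ exactly to the ceiling and then contract by the universal factor $2^{-1/p}$ to land inside $[1,1/\omega(B)]$, after which the argument bifurcates according to which monomial of $\varphi(y,\cdot)$ dominates at that scale. The sufficiency direction and the two degenerate regimes ($\omega(B)=0$ and $\omega(B)$ bounded below) are routine.
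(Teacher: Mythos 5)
Your proof is correct, and it takes a more explicit route than the paper's. The paper first replaces $\varphi(x,t)=t^p+a(x)t^q$ by the equivalent function $\max\{t^p,a(x)t^q\}$, rewrites \aonew{} as the pointwise inequality $\max\{\beta_1^pt^{p-q},a(x)\beta_1^q\}\le\max\{t^{p-q},a(y)\}$ after dividing by $t^q$, and then substitutes the endpoint $t\le\omega(B)^{-1/p}$ coming from $t^p\le\varphi(y,t)\le 1/\omega(B)$; both directions are dispatched in one chain of ``equivalent'' reformulations. You instead work with the sum form directly and separate the two implications. For sufficiency the two arguments are essentially the same estimate in different clothing. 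For necessity, however, your treatment is genuinely more careful: the paper's step ``it suffices to use only the upper bound for $t$'' silently assumes one may test \aonew{} at $t\approx\omega(B)^{-1/p}$, whereas the largest admissible $t$ (pinned by $\varphi(y,t)=1/\omega(B)$) can be much smaller when $a(y)t^q$ dominates -- exactly the regime your bifurcation $t^p\ge a(y)t^q$ versus $a(y)t^q>t^p$ handles, recovering the $a(y)$ term on the right-hand side in the second case. Your explicit handling of the degenerate regimes $\omega(B)=0$ and $\omega(B)$ bounded below (via $a\in L^\infty$) also fills in cases the paper leaves implicit. The price is length; the gain is that every constant is tracked and no step relies on an unjustified choice of test point.
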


\begin{proof}
    We may notice that $\varphi(x,t)\approx \max\{t^p, a(x)t^q\}$. Then \aonew{} becomes
    \begin{equation*}
        \max\{\beta_1^pt^p, a(x)\beta_1^qt^q\}\leq\max\{t^p, a(y)t^q\}
    \end{equation*}
    for $x,y \in B\cap \Omega$, $\omega(B)\leq1$ and $\varphi(y,t)\in [1, \frac{1}{\omega(B)}]$.
Dividing the inequality by $t^q$ we get
 \begin{equation*}
        \max\{\beta_1^pt^{p-q}, a(x)\beta_1^q\}\leq\max\{t^{p-q}, a(y)\}.
    \end{equation*}
    This holds trivially if $a(x)\beta_1^q \leq \beta_1^p t^{p-q}$, and thus the condition is equivalent to 
     \begin{equation*} a(x)\beta_1^q\leq\max\{t^{p-q}, a(y)\}
    \end{equation*}
for $a(x)\beta_1^q > \beta_1^p t^{p-q}$.

    From $\varphi(y,t)\in [1, \frac{1}{\omega(B)}]$ it follows that $t\leq \frac{1}{\omega(B)^{1/p}}$. Since $p-q<0$ it suffices to use only the upper bound for $t$. Thus, the inequality above becomes
    \begin{equation*}
        a(x)\lesssim \max\{a(y), \omega(B)^{\frac{q-p}{p}}\} \approx a(y) + \omega(B)^{\frac{q-p}{p}}. \qedhere
    \end{equation*} 
\end{proof}

\bibliography{ref}

@article{CDH11,
  author = {D. Cruz-Uribe and L. Diening and P. Hästö},
  title = {The maximal operator on weighted variable {Lebesgue} spaces},
  journal = {Fract. Calc. Appl. Anal.},
  volume = {14},
  number = {3},
  pages = {361--374},
  year = {2011}
}

@article{PDE,
  author = {C. De Filippis and G. Mingione},
  title = {Nonuniformly elliptic {Schauder} theory},
  journal = {Invent. Math.},
  volume = {234},
  pages = {1109--1196},
  year = {2023}
}

@article{Dtrick,
  author = {L. Diening},
  title = {Maximal function on generalized {Lebesgue} spaces {$L^{\p}$}},
  journal = {Math. Inequal. Appl.},
  volume = {7},
  number = {2},
  pages = {245--253},
  year = {2004}
}

@misc{DP08,
  author = {L. Diening and P. Hästö},
  title = {Muckenhoupt weights in variable exponent spaces},
  note = {Preprint},
  year = {2008}
}

@book{ves,
  author = {L. Diening and P. Harjulehto and P. Hästö and M. Ružička},
  title = {Lebesgue and {Sobolev} {Spaces} with {Variable} {Exponents}},
  series = {Lect. Notes Math.},
  volume = {2017},
  publisher = {Springer},
  address = {Heidelberg},
  year = {2011}
}

@article{fluid,
  author = {A. Wróblewska-Kaminska},
  title = {Existence result for the motion of several rigid bodies in an incompressible {non-Newtonian} fluid with growth conditions in {Orlicz} spaces},
  journal = {Nonlinearity},
  volume = {27},
  pages = {685--716},
  year = {2014}
}

@article{Gal88,
  author = {D. Gallardo},
  title = {Orlicz spaces for which the {Hardy--Littlewood} maximal operator is bounded},
  journal = {Publ. Mat.},
  volume = {32},
  number = {2},
  pages = {261--266},
  year = {1988}
}

@book{wn,
  author = {J. García-Cuerva and J. Rubio de Francia},
  title = {Weighted norm inequalities and related topics},
  series = {North-Holland Math. Stud.},
  volume = {116},
  note = {Notas de Matemática},
  publisher = {North-Holland},
  address = {Amsterdam},
  year = {1985}
}

@article{crit,
  author = {A. Gogatishvili and V. Kokilashvili},
  title = {Criteria of weighted inequalities in {Orlicz} classes for maximal functions defined on homogeneous type spaces},
  journal = {Georgian Math. J.},
  volume = {1},
  pages = {641--673},
  year = {1994},
  doi = {10.1007/BF02254684}
}

@article{H15,
  author = {P. Hästö},
  title = {The maximal operator on generalized {Orlicz} spaces},
  journal = {J. Funct. Anal.},
  volume = {269},
  number = {12},
  pages = {4038--4048},
  year = {2015}
}

@book{Og,
  author = {P. Harjulehto and P. Hästö},
  title = {Orlicz Spaces and {Generalized} {Orlicz} {Spaces}},
  series = {Lect. Notes Math.},
  volume = {2236},
  publisher = {Springer},
  address = {Cham},
  year = {2019}
}

@article{MaeMOS13a,
  author = {F.-Y. Maeda and Y. Mizuta and T. Ohno and T. Shimomura},
  title = {Boundedness of maximal operators and {Sobolev's} inequality on {Musielak-Orlicz-Morrey} spaces},
  journal = {Bull. Sci. Math.},
  volume = {137},
  pages = {76--96},
  year = {2013}
}

@article{muck,
  author = {B. Muckenhoupt},
  title = {Weighted norm inequalities for the {Hardy} maximal function},
  journal = {Trans. Amer. Math. Soc.},
  volume = {165},
  pages = {207--226},
  year = {1972}
}

@article{Nekvinda,
  author = {A. Nekvinda},
  title = {Hardy-Littlewood maximal operator on {$L^{p(x)}(\mathbb{R}^n)$}},
  journal = {Math. Inequal. Appl.},
  volume = {7},
  pages = {255--266},
  year = {2004}
}

@article{RPex,
  author = {L. Pick and M. Růžička},
  title = {An example of a space {$L^{p(x)}$} on which the {Hardy--Littlewood} maximal operator is not bounded},
  journal = {Expo. Math.},
  volume = {19},
  pages = {369--371},
  year = {2001}
}

@book{musielak,
  author = {J. Musielak},
  title = {Orlicz spaces and modular spaces},
  series = {Lect. Notes Math.},
  volume = {1034},
  publisher = {Springer},
  address = {Berlin},
  year = {1983}
}

@article{BD,
  author = {Y. Chen and S. Levine and M. Rao},
  title = {Variable Exponent, Linear Growth Functionals in Image Restoration},
  journal = {SIAM J. Appl. Math.},
  volume = {66},
  number = {4},
  pages = {1383--1406},
  year = {2006}
}

@article{rev,
  title = {A revised condition for harmonic analysis in generalized {Orlicz} spaces on unbounded domains}, 
  author = {P. Harjulehto and P. Hästö and A. Słabuszewski},
  journal = {Math. Nachr.},
volume = {297},
number = {9},
pages = {3184-3191},
year = {2024}
}

@article{Cruz2003,
  author = {D. Cruz-Uribe and A. Fiorenza and C. J. Neugebauer},
  title = {The maximal function on variable spaces},
  journal = {Ann. Acad. Sci. Fenn. Math.},
  volume = {28},
  number = {1},
  pages = {223--238},
  year = {2003},
  url = {http://eudml.org/doc/123518}
}

@article{s1,
  author = {V. Kokilashvili and S. Samko},
  title = {Maximal and {Fractional} {Operators} in {Weighted} {$L^{p(\cdot)}$} - {Spaces}},
  journal = {Rev. Mat. Iberoam.},
  volume = {20},
  number = {2},
  pages = {493--515},
  year = {2004}
}

@article{ColM15a,
  author = {M. Colombo and G. Mingione},
  title = {Regularity for double phase variational problems},
  journal = {Arch. Ration. Mech. Anal.},
  volume = {215},
  number = {2},
  pages = {443--496},
  year = {2015}
}

@article{kokilashvili2007singular,
  title = {Singular operators in variable spaces {$L^{p(\cdot)}(\Omega, \rho)$} with oscillating weights},
  author = {V. Kokilashvili and N. Samko and S. Samko},
  journal = {Math. Nachr.},
  volume = {280},
  pages = {1145--1156},
  year = {2007}
}

@article{kokilashvili2007maximal,
  title = {The maximal operator in weighted variable spaces {$L^p(\cdot)$}},
  author = {V. Kokilashvili and N. Samko and S. Samko},
  journal = {J. Funct. Spaces Appl.},
  volume = {5},
  pages = {299--317},
  year = {2007}
}

@article{kokilashvili2007general,
  title = {A general approach to weighted boundedness of operators of harmonic analysis in variable exponent {Lebesgue} spaces},
  author = {V. Kokilashvili and S. Samko},
  journal = {Proc. Razmadze Math. Inst.},
  volume = {145},
  pages = {109--116},
  year = {2007}
}

@book{cruz-uribe2013variable,
  title = {Variable {Lebesgue} {Spaces}: {Foundations} and {Harmonic} {Analysis}},
  author = {D. Cruz-Uribe and A. Fiorenza},
  publisher = {Birkhäuser},
  address = {Basel},
  year = {2013}
}

@book{bookPolish,
author = {I. Chlebicka and P. Gwiazda and A. {Świerczewska-Gwiazda} and A. Wróblewska-Kaminska},
year = {2021},
month = {01},
pages = {},
title = {Partial Differential Equations in Anisotropic Musielak-Orlicz Spaces},
isbn = {978-3-030-88855-8},
doi = {10.1007/978-3-030-88856-5},
publisher = {Springer},
address = {Cham}
}

@article{HastoOk2022,
  author = {P. Hästö and J. Ok},
  title = {Maximal regularity for non-autonomous differential equations},
  journal = {J. Eur. Math. Soc.},
  volume = {24},
  number = {4},
  pages = {1285--1334},
  year = {2022},
}
\bibliographystyle{plain}

\end{document}